\documentclass[a4paper,reqno,11pt,oneside]{amsart}

\usepackage[all]{xy}
\xyoption{poly}
\usepackage{amssymb}
\usepackage{amstext}
\usepackage{amsmath}
\usepackage{amscd}
\usepackage{amsthm}
\usepackage{amsfonts}
\usepackage{latexsym}
\usepackage{graphicx}
\usepackage{color}
\usepackage{geometry}
\geometry{top=2.5truecm, bottom=2.5truecm, left=2.5truecm, right=2.5truecm}

\makeatletter
  
  \@addtoreset{equation}{section}
\makeatother

\makeatletter
\@namedef{subjclassname@2020}{%
  \textup{2020} Mathematics Subject Classification}
\makeatother

\newcommand{\calE}{\mathcal{E}}

\newcommand{\calV}{\mathcal{V}}

\newcommand{\ZZ}{\mathbb{Z}}

\newcommand{\FF}{\mathbb{F}}
\newcommand{\PP}{\mathbb{P}}

\newcommand{\Hom}{\operatorname{Hom}}
\newcommand{\Ext}{\operatorname{Ext}}

\def\opn#1#2{\def#1{\operatorname{#2}}} 
\opn\rank{rank} \opn\mnull{null} \opn\Iso{Iso}

%
\def\sfA{\mathsf{A}}
\def\sfC{\mathsf{C}}
\def\coh{\operatorname{\mathsf{coh}}}
\def\CM{\mathsf{CM}}
\def\uCM{\underline{\mathsf{CM}}}
\def\Db{\mathsf{D^b}}
\def\mod{\operatorname{\mathsf{mod}}}
\def\e{\varepsilon}
\opn\Proj{Proj}

\def\rnum#1{\expandafter{\romannumeral #1}}
\def\Rnum#1{\uppercase\expandafter{\romannumeral #1}}

%
\newtheorem{thm}{Theorem}[section]

\newtheorem{lem}[thm]{Lemma}

\theoremstyle{definition}
\newtheorem{defi}[thm]{Definition}
\newtheorem{ex}[thm]{Example}
\newtheorem{nota}[thm]{Notation}

\theoremstyle{remark}
\newtheorem{rem}[thm]{Remark}

\begin{document}

\title [Combinatorial study of stable categories]
{Combinatorial study of stable categories of graded Cohen--Macaulay modules over skew quadric hypersurfaces}

\author{Akihiro Higashitani}
\address{Department of Pure and Applied Mathematics, 
Graduate School of Information Science and Technology, 
Osaka University, 
1-5, Yamadaoka, Suita, Osaka 565-0871, Japan}
\email{higashitani@ist.osaka-u.ac.jp}

\author{Kenta Ueyama}
\address{Department of Mathematics,
Faculty of Education,
Hirosaki University,
1 Bunkyocho, Hirosaki, Aomori 036-8560, Japan}
\email{k-ueyama@hirosaki-u.ac.jp}

\keywords{stable category, Cohen--Macaulay module, noncommutative quadric hypersurface, adjacency matrix, Stanley--Reisner ideal}

\subjclass[2020]{16G50, 16S38, 18G80, 05C50, 13F55}

\begin{abstract}
In this paper, we present a new connection between representation theory of noncommutative hypersurfaces and combinatorics.
Let $S$ be a graded ($\pm 1$)-skew polynomial algebra in $n$ variables of degree $1$ and $f =x_1^2 + \cdots +x_n^2 \in S$.
We prove that the stable category $\mathsf{\underline{CM}}^{\mathbb Z}(S/(f))$ of graded maximal Cohen--Macaulay module over $S/(f)$ can be completely computed using the four graphical operations.
As a consequence, $\mathsf{\underline{CM}}^{\mathbb Z}(S/(f))$ is equivalent to the derived category
$\mathsf{D^b}(\operatorname{\mathsf{mod}} k^{2^r})$, and this $r$ is obtained as the nullity of a certain matrix over ${\mathbb F}_2$.
Using the properties of Stanley--Reisner ideals,
we also show that the number of irreducible components of the point scheme of $S$ that are isomorphic to ${\mathbb P}^1$
is less than or equal to $\binom{r+1}{2}$.
\end{abstract}

\maketitle

\section{Introduction}

Triangulated categories play an increasingly important role
in many areas of mathematics, including representation theory, (commutative and noncommutative) algebraic geometry,
algebraic topology, and mathematical physics.
In particular, there are two major classes of triangulated categories, namely,
the (bounded) derived categories $\Db(\sfA)$ of abelian categories $\sfA$ and
the stable categories $\underline{\sfC}$ of Frobenius categories $\sfC$.
For example, the derived categories $\Db(\coh X)$ of coherent sheaves on algebraic varieties $X$ have been studied extensively in algebraic geometry, and the stable categories $\uCM(A)$ of maximal Cohen--Macaulay modules over (not necessary commutative) Gorenstein algebras $A$
have been studied extensively in representation theory of algebras.
In this paper, we compute the stable categories $\uCM^{\ZZ}(A)$ of graded maximal Cohen--Macaulay modules over
certain noncommutative quadric hypersurface rings $A$ (in the sense of Smith and Van~den~Bergh \cite{SV})
using combinatorial methods. 

Throughout let $k$ be an algebraically closed field of characteristic not $2$.
It is well-known that if $A$ is the homogeneous coordinate ring of a smooth quadric hypersurface in $\PP^{n-1}$,
then $A $ is isomorphic to $ k[x_1,\dots, x_n]/(x_1^2+\cdots +x_n^2)$, so we have
an equivalence of triangulated categories
\begin{align} \label{eq.c}
\uCM^{\ZZ}(A)\cong
\begin{cases}
\uCM^{\ZZ}(k[x_1]/(x_1^2)) \cong \Db(\mod k)  &\text { if $n$ is odd,} \\
\uCM^{\ZZ}(k[x_1, x_2]/(x_1^2+x_2^2)) \cong \Db(\mod k^2) & \text { if $n$ is even}
\end{cases}
\end{align}
by Kn\"orrer's periodicity theorem (\cite[Theorem 3.1]{Kn}).
The main aim of this paper is to give a skew generalization of this equivalence.
More precisely, we consider the following setting.

\begin{nota}\label{nota}
For a symmetric matrix $\e:= (\e_{ij}) \in M_n(k)$ such that $\e_{ii}=1$ and $\e_{ij}=\e_{ji}=\pm 1$, we fix the following notations: 
\begin{enumerate}
\item{} the standard graded algebra $S_{\e}:=k\langle x_1, \dots, x_n\rangle /(x_ix_j-\e_{ij}x_jx_i)$, called a \emph{$(\pm 1)$-skew polynomial algebra} in $n$ variables, 
\item{} the point scheme $\calE_{\e}$ of $S_{\e}$, 
\item{} the central element $f_{\e}:=x_1^2+\cdots +x_n^2\in S_{\e}$, 
\item{} $A_{\e}:=S_{\e}/(f_{\e})$, and 
\item{} the graph $G_{\e}$ where 
$V(G_{\e})=\{1, \dots , n\}$ and $E(G_{\e})=\{\{i,j\} \mid \e_{ij}=\e_{ji}=1, i \neq j \}$.  
\end{enumerate} 
\end{nota}

In \cite{Ukp}, the second author gave a classification theorem for $\uCM^{\ZZ}(A_{\e})$ with $n\leq 5$.
After that, in \cite{MU}, Mori and the second author introduced graphical methods to compute $\uCM^{\ZZ}(A_{\e})$.
They presented the four operations for $G_\e$, called
mutation, relative mutation, Kn\"orrer reduction, and two points reduction,
and showed that $\uCM^{\ZZ}(A_{\e})$  can be completely computed up to $n\leq 6$ by using these four graphical operations
(see \cite[Section 6.4]{MU}).
We first extend this result to arbitrary $n \in \ZZ_{>0}$.

\begin{thm}\label{thm1}
Let $A_\e$ and $G_\e$ be as in Notation \ref{nota}.
By using mutation, relative mutation, Kn\"orrer reduction, and two points reduction finitely many times,
$G_\e$ can be reduced to the one-vertex graph, and there exists an equivalence of triangulated categories
\[\uCM^{\ZZ}(A_\e) \cong \Db(\mod k^{2^r})\]
where $r$ is the number of times we applied two points reduction.
\end{thm}

Thus, we can completely compute $\uCM^{\ZZ}(A_\e)$ by purely combinatorial methods.
Thanks to this theorem, we obtain the following two consequences.

\begin{thm} \label{thm2}
Let $A_\e$ and $G_\e$ be as in Notation \ref{nota}.
Then we have an equivalence of triangulated categories
\[\uCM^{\ZZ}(A_\e) \cong \Db(\mod k^{2^r})\]
where
\[r= \mnull_{\FF_2}
\begin{pmatrix}  
 & & &1 \\
 &M(G_\e) & &\vdots \\
 & & &1 \\
1 &\cdots &1 &0
\end{pmatrix}
\]
and $M(G_\e)$ is the adjacency matrix of $G_\e$ over $\FF_2$.
In particular, $A_\e$ has $2^{r}$ indecomposable non-projective
graded maximal Cohen--Macaulay modules up to isomorphism and degree shifts.
\end{thm}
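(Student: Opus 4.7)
The plan is to combine Theorem~\ref{thm1} with a direct analysis of how the $\FF_2$-nullity $r$ transforms under each of the four graphical operations. Throughout I write $\widetilde{M}(G_\e)$ for the bordered $(n+1)\times(n+1)$ matrix appearing in the statement.

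First, I would establish that each of the four operations preserves the integer $r$ in the appropriate sense. Mutation and relative mutation keep the vertex set of $G_\e$ fixed but alter some edges; the goal is to produce an invertible matrix $P \in \operatorname{GL}_{n+1}(\FF_2)$ with $P^{T}\, \widetilde{M}(G_\e)\, P = \widetilde{M}(G_{\e'})$, so that $r$ is invariant by congruence. Knörrer reduction and two points reduction drop two vertices of $G_\e$, so one must instead show that $\rank \widetilde{M}(G_\e) = \rank \widetilde{M}(G_{\e'}) + 2$, whence $r$ is unchanged. This is verified by performing $\FF_2$-elementary row and column operations on $\widetilde{M}(G_\e)$ to reach a block form with a nondegenerate $2\times 2$ block (corresponding to the two removed vertices) and a complementary block equal to $\widetilde{M}(G_{\e'})$.

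Second, I would invoke Theorem~\ref{thm1} to iterate these operations until $G_\e$ reduces to a base case with at most two vertices. For such bases $\uCM^{\ZZ}(A_\e)$ is explicit from \eqref{eq.c}, and one verifies the matrix formula by hand: for $n=1$ the bordered matrix $\left(\begin{smallmatrix}1 & 1\\ 1 & 0\end{smallmatrix}\right)$ is invertible, giving $r=0$ and matching $\Db(\mod k)=\Db(\mod k^{2^0})$; for $n=2$ one treats the commutative and skew cases of $\e$ separately and confirms that $r$ equals the exponent predicted by \eqref{eq.c}. Combining the two steps, since \cite{MU} already shows that each of the four operations induces a triangle equivalence on stable categories, the reduction prescribed by Theorem~\ref{thm1} transports the base-case equivalence back to $A_\e$ while the invariance of $r$ carries the correct exponent, yielding $\uCM^{\ZZ}(A_\e) \cong \Db(\mod k^{2^r})$; the count of indecomposable non-projective graded Cohen--Macaulay modules up to isomorphisms and degree shifts is then immediate.

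The main obstacle is the linear-algebra bookkeeping in the first step, in particular constructing the correct $\FF_2$-congruence for mutation and relative mutation, where the flipped edges interact nontrivially with the bordering row and column of ones (so that the naive change-of-basis matrix produced by the graph operation must be corrected by an extra pivot in the border). For Knörrer reduction and two points reduction the delicate point is to choose row/column operations so that the $2\times 2$ block coming from the two removed vertices fully decouples from the rest of the matrix; this requires a careful choice of pivots using both the entries of $M(G_\e)$ at the deleted vertices and the extra ones in the border.
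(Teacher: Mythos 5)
Your overall strategy---track the $\FF_2$-nullity of the bordered matrix through the four graphical operations via congruence and rank bookkeeping, then reduce to a tiny base case---is essentially the paper's own route (its Lemma~\ref{lem.rank} writes down exactly the congruences $(E_{i,n+1}+E)X(G)(E_{n+1,i}+E)$ for mutation and $(E_{i,j}+E_{i,n}+E)X(G)(E_{j,i}+E_{n,i}+E)$ for relative mutation, the latter using the isolated-vertex row $(0,\dots,0,1)$). However, your treatment of the two points reduction contains a genuine error. That operation removes only \emph{one} vertex (it requires two isolated vertices $v,w$ but deletes just $v$), and it does \emph{not} preserve the stable category: it yields $\uCM^{\ZZ}(A_{\e}) \cong \uCM^{\ZZ}(A_{\e'})^{\times 2}$, so the exponent must satisfy $r = r'+1$. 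Your claim that ``$r$ is unchanged'' under it is therefore inconsistent with the categorical statement you are transporting; worse, if $r$ really were invariant under all four operations, then reducing every graph to the one-vertex base case (where $r=0$) would force $r=0$ identically, contradicting the theorem. The correct bookkeeping is that under two points reduction the matrix size drops by one while the rank is unchanged (the deleted isolated vertex contributes a row and column identical to those of the remaining isolated vertex), so the nullity drops by exactly $1$, matching the doubling of the category. Alternatively, do what the paper does: only push the congruence argument as far as $G(\alpha,\beta)$ and compute $\mnull_{\FF_2}X(G(\alpha,\beta)) = \beta-1$ directly, with $\beta-1$ already identified as the exponent in the proof of Theorem~\ref{thm1}.

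Two smaller points. The $n=1$ bordered matrix is $\left(\begin{smallmatrix}0 & 1\\ 1 & 0\end{smallmatrix}\right)$, not $\left(\begin{smallmatrix}1 & 1\\ 1 & 0\end{smallmatrix}\right)$, since adjacency matrices here have zero diagonal (no loops); both happen to be invertible, so $r=0$ either way, but the slip suggests the base-case verification was not actually carried out. And the final count of indecomposables is not quite ``immediate'' from the equivalence alone; the paper derives it from the proof of \cite[Theorem 5.4]{MU}, which identifies the indecomposables up to isomorphism and degree shift with the $2^{\mnull_{\FF_2}(X(G_\e))}$ elements of $\mathrm{Ker}_{\FF_2}(X(G_\e))$.
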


\begin{thm} \label{thm3}
Let $A_\e$ be as in Notation \ref{nota}.
Then $A_\e$ is a noncommutative graded isolated singularity (in the sense of \cite{Uis}).
\end{thm}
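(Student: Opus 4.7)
The plan is to derive Theorem \ref{thm3} as a direct consequence of Theorem \ref{thm2} together with the general characterization of noncommutative graded isolated singularities from \cite{Uis}, so that essentially all the work has already been done in establishing the stable-category description.

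First I would verify the AS-Gorenstein setup. Since $S_\e$ is a $(\pm 1)$-skew polynomial algebra in $n$ variables, it is Noetherian and AS-regular of global dimension $n$. The element $f_\e = x_1^2 + \cdots + x_n^2$ is a central regular element of degree $2$, so the quotient $A_\e = S_\e/(f_\e)$ is a Noetherian AS-Gorenstein algebra. This places $A_\e$ in exactly the framework in which the notion of noncommutative graded isolated singularity is defined in \cite{Uis}.

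Next I would invoke the criterion from \cite{Uis}: for a Noetherian AS-Gorenstein algebra $A$, being a noncommutative graded isolated singularity is equivalent to the stable category $\uCM^{\ZZ}(A)$ being Hom-finite (and, in fact, equivalent to $\uCM^{\ZZ}(A)$ being triangle equivalent to $\Db(\mod \Lambda)$ for some finite-dimensional algebra $\Lambda$ of finite global dimension). Theorem \ref{thm2} supplies precisely such a description: one has $\uCM^{\ZZ}(A_\e) \cong \Db(\mod k^{2^r})$, and since $k^{2^r}$ is a finite-dimensional semisimple algebra (global dimension $0$), this derived category is manifestly Hom-finite. Applying the criterion yields Theorem \ref{thm3}.

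The main obstacle is not in this deduction itself but in Theorem \ref{thm2}, whose content is absorbed here. The only care needed at this stage is to match the exact formulation of the characterization used from \cite{Uis}: one must confirm that the AS-Gorenstein hypothesis on $A_\e$ is precisely what is required, and that Hom-finiteness of $\uCM^{\ZZ}(A_\e)$ (equivalently, the explicit equivalence with $\Db(\mod k^{2^r})$) is indeed the equivalent condition in the version of the criterion being cited, so that the conclusion follows with no additional argument.
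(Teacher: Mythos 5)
Your proposal is correct and follows essentially the same route as the paper: Theorem \ref{thm3} is deduced in one step from Theorem \ref{thm2} together with a criterion from \cite{Uis} placing the noetherian AS-Gorenstein algebra $A_\e$ among graded isolated singularities. The only discrepancy is the precise criterion invoked: the paper cites \cite[Theorem 3.4]{Uis}, which concerns \emph{finite Cohen--Macaulay representation type} (exactly the ``in particular'' clause of Theorem \ref{thm2}), rather than a Hom-finiteness characterization of $\uCM^{\ZZ}(A_\e)$; since \cite{Uis} is specifically a paper about finite representation type, you should phrase the citation in that form to be safe.
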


It is easy to see that Theorem~\ref{thm2} is a generalization of (\ref{eq.c}).
Moreover, Theorem~\ref{thm3} tells us that $A_\e$ is a homogeneous coordinate ring of a noncommutative ``smooth'' quadric hupersurface
(see \cite{SV}, \cite{MU} for details).

Let $A$ be a graded algebra finitely generated in degree $1$.
A graded $A$-module $M$ is called a \emph{point module} if $M$ is cyclic and has Hilbert series $H_M(t)=(1-t)^{-1}$.
If $A$ is commutative, these modules correspond to the closed points of the projective scheme $\Proj A$.
In \cite{ATV}, Artin, Tate, and Van den Bergh introduced a scheme $\calE$ whose closed points parameterize the isomorphism classes of point modules over $A$;
so it is called the \emph{point scheme} of $A$.
Since then, point schemes are an essential tool to study graded algebras in noncommutative algebraic geometry.

In \cite[Conjecture 1.3]{Ukp}, it was conjectured that the structure of $\uCM^{\ZZ}(A_\e)$ is determined by
the number of irreducible components of the point scheme $\calE_\e$ of $S_\e$ that are isomorphic to $\PP^1$.
This is true if $n\leq 6$ (see \cite[Theorem 6.20]{MU}),
but unfortunately, it is known to fail for $n=7$ (see \cite[Remark 6.21]{MU}).
Using a similar approach to the proof of Theorem~\ref{thm1} and the point of view of Stanley--Reisner ideals,
we give a combinatorial proof of the following result.

\begin{thm}\label{thm4}
Let $A_\e$ be as in Notation \ref{nota} so that there is a non-negative integer $r$ such that $\uCM^{\ZZ}(A_\e) \cong \Db(\mod k^{2^r})$.
Then the number $\ell_\e$ of irreducible components of $\calE_\e$ that are isomorphic to $\PP^1$ is less than or equal to $\binom{r+1}{2}$.
\end{thm}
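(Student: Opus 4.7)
The plan is to combine the reduction strategy behind Theorem~\ref{thm1} with a Stanley--Reisner analysis of the point scheme $\calE_\e$.

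As a first step I would extract a purely combinatorial description of $\ell_\e$. For a $(\pm 1)$-skew polynomial algebra, the defining equations of $\calE_\e \subset \PP^{n-1}$ cut out a union of coordinate linear subspaces, so via Stanley--Reisner theory applied to this configuration the irreducible components of $\calE_\e$ are in bijection with the facets of the clique complex of $G_\e$. Under this bijection, the $\PP^1$-components correspond precisely to the $1$-dimensional facets, i.e.\ edges $\{i,j\} \in E(G_\e)$ that lie in no triangle of $G_\e$. Thus $\ell_\e$ becomes a purely graph-theoretic invariant of $G_\e$.

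Next I would analyze how the pair $(\ell_\e, r)$ transforms under the four graphical operations (mutation, relative mutation, Kn\"orrer reduction, two points reduction). Since each operation preserves $\uCM^{\ZZ}(A_\e)$, the integer $r$ is either preserved or controllably transformed via Theorem~\ref{thm2}; the task is then to verify that the number of $1$-dimensional facets of the clique complex of $G_\e$ changes in a way consistent with the bound $\ell_\e \leq \binom{r+1}{2}$. This is the Stanley--Reisner analogue of the inductive scheme used to prove Theorem~\ref{thm1}, and it reduces the proof to a short list of base cases where no further reduction applies.

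In the base case I would pass to the augmented adjacency matrix $\tilde M$ of Theorem~\ref{thm2} and work directly with $\ker \tilde M$ over $\FF_2$. A basis of this kernel, together with the distinguished augmentation coordinate, gives $r+1$ pieces of combinatorial data (subsets of $\{1,\dots,n\}$ viewed as supports of null vectors), and the plan is to construct an injection from the set of $1$-dimensional facets of the clique complex into unordered pairs of such subsets; the target has size $\binom{r+1}{2}$, which yields the stated bound.

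The main obstacle will be this final injection: given an isolated edge $\{i,j\}$, one must identify a canonical pair of null vectors of $\tilde M$ — essentially a ``parity witness'' coming from the minimal prime of the Stanley--Reisner ideal attached to $\{i,j\}$ — and prove that distinct isolated edges produce distinct pairs. Working out the precise combinatorial criterion that determines which pair of null vectors a given isolated edge selects, and checking injectivity across all facets of dimension one, is the technical heart of the argument. The degenerate case $r=0$, where the bound forces $\ell_\e = 0$ and no isolated edges can exist, should serve as a consistency check for the construction.
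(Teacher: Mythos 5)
Your very first step---the combinatorial description of $\ell_\e$---is incorrect, and this error undermines the rest of the plan. Since $\calE_{\e}=\bigcap_{\e_{ij}\e_{jk}\e_{ki}=-1}\calV(x_ix_jx_k)$, the relevant Stanley--Reisner complex is the one in which a triple $\{i,j,k\}$ is a face precisely when the number of edges of $G_\e$ among the three pairs is \emph{odd} (one or three); this is not the clique complex of $G_\e$. Consequently the $2$-element facets, which are what index the $\PP^1$-components, are not the edges lying in no triangle. For instance, when $G_\e$ is a $4$-cycle on $\{1,2,3,4\}$ together with two isolated vertices $5,6$, the three $\PP^1$-components are $\calV(x_1,x_2,x_3,x_4)$, $\calV(x_1,x_3,x_5,x_6)$, $\calV(x_2,x_4,x_5,x_6)$, corresponding to the pairs $\{5,6\}$, $\{2,4\}$, $\{1,3\}$ --- none of which is an edge --- whereas every edge of the $4$-cycle lies in no triangle yet contributes no component, because for an isolated vertex $x$ the triple $\{i,j,x\}$ with $ij\in E$ has exactly one edge and hence is a face containing $\{i,j\}$. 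The correct characterization (after arranging, via a mutation that does not change the point scheme, that $G_\e$ has an isolated vertex) is that the $2$-element facets are exactly the pairs $\{i,j\}$ with $N_{G_\e}(i)=N_{G_\e}(j)$, and these are necessarily non-edges. Everything downstream --- the tracking of $\ell_\e$ under the four operations and the proposed injection --- is therefore built on the wrong set of facets.

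Beyond this, the argument is not actually carried out: you explicitly defer the construction of the injection from $2$-dimensional facets into unordered pairs of null vectors of the augmented adjacency matrix, calling it ``the technical heart,'' so even granting a corrected facet description the proposal remains an outline. Two further cautions if you pursue your scheme: only mutations are known to preserve the point scheme, so you cannot freely track $\ell_\e$ through relative mutations and the two reductions without separate combinatorial bookkeeping; and a general relative mutation can change the facet count in either direction, so one must choose the relative mutations carefully (the workable choice is to merge vertices with identical neighborhoods, which makes the count of such pairs decrease while the facet count does not decrease, eventually reducing the bound to counting pairs among at most $\beta=r+1$ isolated vertices, giving $\binom{r+1}{2}$).
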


This theorem shows that the upper bound of $\ell_\e$ which appears in \cite[Conjecture 1.3]{Ukp} holds true for arbitrary $n \in \ZZ_{>0}$.

In short, the results and discussion in this paper establish a novel connection between representation theory of noncommutative hypersurfaces and combinatorics.

\section{Graphical Methods to Compute Stable Categories}
\subsection{Stable Categories of Graded Maximal Cohen--Macaulay Modules}
Throughout this paper, we continue to use Notation \ref{nota}.
It is well-known that $S_\e=k\langle x_1, \dots, x_n\rangle /(x_ix_j-\e_{ij}x_jx_i)$ is a noetherian Koszul AS-regular algebra.
Since $f_\e=x_1^2+\cdots +x_n^2$ is a regular central element of $S_\e$ of degree $2$,
it follows that $A_\e=S_\e/(f_\e)$ is a noetherian Koszul AS-Gorenstein algebra.
A finitely generated graded $A_\e$-module $M$ is called \emph{maximal Cohen--Macaulay} if $\Ext^i_{A_\e}(M, A_\e)=0$ for all $i \neq 0$.
We denote by $\CM^{\ZZ}(A_\e)$ the category of (finitely generated) graded maximal Cohen--Macaulay $A_\e$-modules
with degree preserving $A_\e$-module homomorphisms.

The stable category of graded maximal Cohen--Macaulay modules, denoted by $\uCM^{\ZZ}(A_\e)$, has the same objects as $\CM^{\ZZ}(A_\e)$
and the morphism space is given by
\[ \Hom_{\uCM^{\ZZ}(A_\e)}(M, N) = \Hom_{\CM^{\ZZ}(A_\e)}(M,N)/P(M,N) \]
where $P(M,N)$ consists of degree preserving $A_\e$-module homomorphisms factoring through a graded projective module.
Since $A_\e$ is AS-Gorenstein, 
$\CM^{\ZZ}(A_\e)$ is a Frobenius category and
$\uCM^{\ZZ}(A_\e)$ is a triangulated category whose translation functor $[1]$ is given by 
the cosyzygy functor $\Omega^{-1}$ (see \cite[Section 4]{Bu}, \cite[Theorem 3.1]{SV}).

\subsection{Two Mutations and Two Reductions of Graphs} 

A \emph{graph} $G$ consists of a set of vertices $V(G)$ and a set of edges $E(G)$ between two vertices. 
In this paper, we always assume that $V(G)$ is a finite set and $G$ has neither loops nor multiple edges. 
An edge between two vertices $v, w\in V(G)$ is written by $vw \in E(G)$. 

Let $G$ be a graph. 
A graph $G'$ is the \emph{induced subgraph} of $G$ induced by $V' \subset V(G)$ if $vw \in E(G')$ whenever $v,w \in V'$ and $vw \in E(G)$. 
For a subset $W \subset V(G)$, we denote by $G \setminus W$ the induced subgraph of $G$ induced by $V(G) \setminus W$. 
For a vertex $v \in V(G)$, let $N_G(v)=\{u \in V(G) \mid uv \in E(G)\}$. 

First, we recall the notion of two mutations, which preserve the stable category of
graded maximal Cohen--Macaulay modules.

\begin{defi} [Mutation, {\cite[Definition 6.3]{MU}}]
Let $G$ be a graph and $v\in V(G)$. 
The \emph{mutation} $\mu_v(G)$ of $G$ at $v$ is the graph $\mu_v(G)$ where $V(\mu_v(G))=V(G)$ and 
$$E(\mu_v(G))=\{ vw \mid w \in V(G) \setminus N_G(v) \} \cup E(G \setminus \{v\}).$$
\end{defi}

\begin{rem}
The notion of mutation $\mu_v(G)$ is called \emph{switching} on $\{v\}$ in the context of algebraic graph theory. See \cite[Section 11.5]{GR}. 
Moreover, we see that applying the consecutive mutations $\mu_{v_1},\ldots,\mu_{v_m}$ for distinct vertices $v_1,\ldots,v_m$ corresponds to switching on $\{v_1,\ldots,v_m\}$. 
In particular, the resulting graph is independent of the choice of the order of consecutive mutations. 
\end{rem}

\begin{ex}\label{ex.m}
\begin{enumerate}
\item
\[G= \xy /r2pc/: 
{\xypolygon5{~={90}~*{\xypolynode}~>{}}},
"1";"2"**@{-},
"1";"3"**@{-},
"1";"5"**@{-},
"2";"4"**@{-},
"3";"4"**@{-},
"4";"5"**@{-}
\endxy
\quad \Longrightarrow  \quad
\mu_{1}(G) = \xy /r2pc/: 
{\xypolygon5{~={90}~*{\xypolynode}~>{}}},
"1";"4"**@{-},
"2";"4"**@{-},
"3";"4"**@{-},
"4";"5"**@{-}
\endxy
\]
\item
\[G= \xy /r2pc/: 
{\xypolygon5{~={90}~*{\xypolynode}~>{}}},
"1";"3"**@{-},
"3";"5"**@{-},
"5";"2"**@{-},
"2";"4"**@{-},
"4";"1"**@{-},
"3";"4"**@{-}
\endxy
\quad \Longrightarrow  \quad
\mu_{4}(\mu_{3}(G)) = \xy /r2pc/: 
{\xypolygon5{~={90}~*{\xypolynode}~>{}}},
"2";"3"**@{-},
"3";"4"**@{-},
"4";"5"**@{-},
"5";"2"**@{-},
\endxy
\]
\end{enumerate}
\end{ex}

\begin{lem} [Mutation Lemma, {\cite[Lemma 6.5]{MU}}] \label{lem.mg} \label{lem.ramu} 
If $G_{\e'}=\mu_v(G_{\e})$ for some $v \in V(G_{\e})$, then $\uCM^{\ZZ}(A_{\e})\cong \uCM^{\ZZ}(A_{\e'})$.   
\end{lem}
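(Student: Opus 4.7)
The plan is to identify $\uCM^{\ZZ}(A_\e)$ with an algebraic object that depends transparently on the matrix $\e$, and to argue the desired equivalence at that level. The natural vehicle is the Smith--Van den Bergh description: since $S_\e$ is Koszul AS-regular and $f_\e$ is a regular central element of degree $2$, there is a triangle equivalence
\[
\uCM^{\ZZ}(A_\e) \cong \Db(\mod C_\e),
\]
where $C_\e = C(S_\e,f_\e)$ is a finite-dimensional graded $k$-algebra, to be thought of as a noncommutative analog of the even Clifford algebra of the quadric $f_\e$. It therefore suffices to exhibit a derived equivalence, and ideally an algebra isomorphism or a Morita equivalence, between $C_\e$ and $C_{\e'}$.

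The first step is to make $C_\e$ explicit in terms of the sign data $\e$; one expects a presentation by $n$ generators $y_1,\dots,y_n$ satisfying skew relations $y_iy_j+\e_{ij}y_jy_i=0$ for $i\ne j$, together with a quadratic normalization coming from $f_\e$. The second and decisive step is to show that mutation at $v$, which flips exactly the entries $\e_{vj}$ for $j\ne v$, corresponds on the Clifford side to an inner-type twist of a single generator. Concretely, I would try to define an isomorphism $C_{\e'}\to C_\e$ which fixes $y_j$ for $j\ne v$ and sends $y_v$ to $y_v\cdot \omega$, where $\omega\in C_\e$ is an invertible element that anticommutes with every $y_j$ for $j\ne v$ and commutes with $y_v$; the obvious candidate is a scalar multiple of a suitable monomial in the remaining $y_j$'s, and the point is to check that this substitution respects both the modified skew relations and the quadratic normalization.

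The main obstacle is pinpointing the correct $\omega$: the signs produced by moving such a monomial past the individual $y_j$'s depend on the parity of $n$ and on the entries $\e_{ij}$ with $i,j\ne v$, so some bookkeeping (and perhaps a preliminary passage to a Morita-equivalent replacement of $C_\e$) is needed before the required relations hold on the nose. A conceptually cleaner alternative, which I would keep in reserve, is to work directly with graded matrix factorizations of $f_\e$ over $S_\e$: given a factorization $(\varphi,\psi)$ of $f_\e$, one builds a factorization of $f_{\e'}$ over $S_{\e'}$ by conjugating $\varphi$ and $\psi$ by an invertible degree-zero operator that encodes the sign flips at $v$, and then verifies that the resulting functor is an equivalence of triangulated categories. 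Either route translates the combinatorial operation $\mu_v$ into a single clean algebraic identity, localized entirely at the vertex $v$, with the rest of the graph $G_\e$ playing no role.
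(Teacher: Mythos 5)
The first thing to note is that the paper itself does not prove this lemma: it is quoted verbatim from \cite[Lemma 6.5]{MU}, so your attempt has to be measured against the argument given there rather than against anything in this paper.

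Your reduction to the Smith--Van den Bergh algebra $C_\e=A_\e^![f_\e^{-1}]_0$ is a legitimate starting point, but the decisive step --- producing an invertible degree-zero $\omega$ that anticommutes with every $y_j$ for $j\neq v$ and commutes with $y_v$ --- fails in general, and not merely for bookkeeping reasons. Take $G_\e$ to be the empty graph on $n=3$ vertices, i.e.\ $\e_{ij}=-1$ for all $i\neq j$. Then $A_\e^!$ has relations $y_iy_j=y_jy_i$ and $y_1^2=y_2^2=y_3^2$, so the localization $A_\e^![f_\e^{-1}]$ is commutative with each $y_j$ invertible; the requirement $\omega y_j=-y_j\omega$ then forces $2\omega y_j=0$ and hence $\omega=0$ (as $\operatorname{char}k\neq 2$). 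So no candidate $\omega$, monomial or otherwise, exists, even though $C_\e\cong C_{\e'}\cong k^4$ in this case; the isomorphism simply cannot be ``localized entirely at $v$'' on the Clifford side, and the rest of the graph genuinely intervenes. The matrix-factorization fallback as you state it has the same missing ingredient: $\varphi,\psi$ have entries in $S_\e$, whereas a factorization of $f_{\e'}$ must live over the different ring $S_{\e'}$, and conjugating by an invertible operator never changes the ambient ring.

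The idea you are missing is to relate $A_\e$ and $A_{\e'}$ \emph{before} passing to $C_\e$ or to matrix factorizations, via a Zhang twist of the ambient ring. The graded automorphism $\sigma$ of $S_\e$ with $\sigma(x_v)=-x_v$ and $\sigma(x_j)=x_j$ for $j\neq v$ produces a twist $S_\e^\sigma$ whose defining relations are exactly those of $S_{\e'}$ (the twist flips precisely the signs $\e_{vj}$, which is the mutation $\mu_v$); under this identification $f_\e$ becomes $\sum_{j\neq v}x_j^2-x_v^2$, and the further rescaling $x_v\mapsto\sqrt{-1}\,x_v$ (available since $k$ is algebraically closed of characteristic $\neq 2$) carries this to $f_{\e'}$. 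Hence $A_{\e'}$ is a twist of $A_\e$, and twisting induces an equivalence of graded module categories preserving finitely generated projectives and the maximal Cohen--Macaulay condition, giving $\uCM^{\ZZ}(A_\e)\cong\uCM^{\ZZ}(A_{\e'})$. This is the mechanism behind \cite[Lemma 6.5]{MU}; if you insist on arguing on the Clifford side, you would need to realize $C_{\e'}$ as the corresponding twist of $C_\e$ rather than as an inner twist by a single element.
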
 

\begin{defi}[Relative Mutation, {\cite[Definition 6.6]{MU}}]
Let $u,v \in V(G)$ be distinct vertices.
Then the \emph{relative mutation} $\mu_{v \leftarrow u}(G)$ of $G$ at $v$ with respect to $u$ is the graph $\mu_{v \leftarrow u}(G)$
where $V(\mu_{v \leftarrow u}(G))=V(G)$ and
\begin{align*}
E(\mu_{v \leftarrow u}(G))
=\{vw \mid w \in N_G(u) \setminus N_G(v)\} \cup \{vw \mid w \in N_G(v) \setminus N_G(u)\} \cup E(G \setminus \{v\}).
\end{align*}
\end{defi}

\begin{ex}\label{ex.rm}
\begin{enumerate}
\item
\[G= \xy /r2pc/: 
{\xypolygon6{~={90}~*{\xypolynode}~>{}}},
"1";"3"**@{-},
"1";"5"**@{-},
"2";"3"**@{-},
"2";"4"**@{-},
"3";"5"**@{-},
"4";"5"**@{-}
\endxy
\quad \Longrightarrow  \quad
\mu_{1 \leftarrow 2}(G) = \xy /r2pc/: 
{\xypolygon6{~={90}~*{\xypolynode}~>{}}},
"1";"4"**@{-},
"1";"5"**@{-},
"2";"3"**@{-},
"2";"4"**@{-},
"3";"5"**@{-},
"4";"5"**@{-}
\endxy
\]
\item
\[G= \xy /r2pc/: 
{\xypolygon6{~={90}~*{\xypolynode}~>{}}},
"2";"3"**@{-},
"3";"4"**@{-},
"4";"2"**@{-},
"2";"5"**@{-},
"5";"6"**@{-},
"6";"2"**@{-}
\endxy
\quad \Longrightarrow  \quad
\mu_{2 \leftarrow 4}(\mu_{2 \leftarrow 3}(G)) = \xy /r2pc/: 
{\xypolygon6{~={90}~*{\xypolynode}~>{}}},
"3";"4"**@{-},
"2";"5"**@{-},
"5";"6"**@{-},
"6";"2"**@{-}
\endxy
\]
\end{enumerate}
\end{ex}

\begin{lem}[Relative Mutation Lemma, {\cite[Lemma 6.7]{MU}}]\label{lem.M2}
Suppose that $G_{\e}$ contains an isolated vertex $u$. 
If $G_{\e'}=\mu_{v \leftarrow w}(G_\e)$ for some distinct vertices $v, w \in V(G_\e)$ not equal to $u$, 
then $\uCM^{\ZZ}(A_{\e})\cong \uCM^{\ZZ}(A_{\e'})$.
\end{lem}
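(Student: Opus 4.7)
The plan is to deduce the Relative Mutation Lemma from the ordinary Mutation Lemma (Lemma \ref{lem.mg}) by exploiting the crucial algebraic feature of the isolated vertex: in $S_\e$ the generator $x_u$ anticommutes with every other $x_i$, so $x_u$ is ``anticentral'' and $x_u^2$ is central. The first step is to translate the combinatorial operation into algebra. The relative mutation $\mu_{v\leftarrow w}$ modifies only the $v$-th row and column of the adjacency matrix of $G_\e$ by XOR-ing with the $w$-th row and column; this means $\e'_{vj} = -\e_{vj}\e_{wj}$ for $j\ne v,w$, while $\e'_{vw}=\e_{vw}$ and all other entries of $\e$ are unchanged, and in particular $f_{\e'}=f_\e$. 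So $A_\e$ and $A_{\e'}$ are two graded hypersurface rings that differ only in the commutation of $x_v$ with the $x_j$ for $j\ne v,w$.

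The heart of the argument is to build an equivalence $\uCM^{\ZZ}(A_\e)\cong\uCM^{\ZZ}(A_{\e'})$ that genuinely uses the isolated vertex. A direct degree-$1$ substitution $x_v\mapsto \alpha x_v+\beta x_w$ in $S_\e$ fails to give an algebra isomorphism to $S_{\e'}$, because the commutation relations it would force on $x_w$ with the $x_j$ then conflict with the fixed ones. The useful observation is that the degree-$2$ element $x_ux_w$ satisfies $(x_ux_w)x_j = -\e_{wj}\,x_j(x_ux_w)$ for $j\ne u,w$, because $x_u$ anticommutes with every $x_j$; thus $x_ux_w$ has exactly the commutation behaviour that a ``mutated $x_v$'' ought to have, even though it lives in the wrong degree. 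I would use this observation to set up the equivalence one categorical level up from the algebra itself, replacing $S_\e$ with a category whose morphisms can absorb the extra degree. Two natural choices are: the matrix factorization category of $(S_\e,f_\e)$, in which one builds an equivalence with the matrix factorization category of $(S_{\e'},f_{\e'})$ via a conjugation operator constructed from $x_u$; or the noncommutative Clifford-type algebra $C_\e$ associated with $(S_\e,f_\e)$ by Smith--Van den Bergh \cite{SV}, in which, after passing to a suitable localization, $x_u$ becomes invertible and conjugation by it produces a Morita equivalence to $C_{\e'}$.

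The main obstacle is exactly this ``shift of degree'': the element $x_ux_w$ cannot be substituted for the degree-$1$ generator $x_v$ inside the graded algebra $S_\e$, so the comparison of the two hypersurface rings must be realised at the level of $\uCM^{\ZZ}$ (or its equivalent incarnations via matrix factorizations or Clifford-type algebras) rather than via a naive algebra isomorphism. Once a framework is chosen in which $x_u$ or $x_u^2$ may be legitimately inverted, the check that the resulting equivalence intertwines the new commutation relations is a direct $\FF_2$ bookkeeping, using exactly that $u$ is isolated so that every $\e_{uj}=-1$ for $j\ne u$. Transporting this equivalence back through the standard description of $\uCM^{\ZZ}(A_\e)$ as the matrix factorization category of $(S_\e,f_\e)$ then yields $\uCM^{\ZZ}(A_\e)\cong\uCM^{\ZZ}(A_{\e'})$, completing the proof.
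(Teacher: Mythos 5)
First, note that the paper does not prove this lemma at all: it is quoted verbatim from \cite[Lemma 6.7]{MU}, so there is no in-paper proof to match your argument against. Judged on its own terms, your proposal correctly isolates the key mechanism --- since $u$ is isolated, $x_u$ anticommutes with every other generator, the product $x_ux_w$ has precisely the commutation signs needed to ``correct'' $x_v$ (your computation $\e'_{vj}=-\e_{vj}\e_{wj}$ for $j\neq v,w$ is right), and the degree obstruction forces one to work not with $S_\e$ itself but with an even-degree avatar such as the Smith--Van den Bergh algebra $C(A_\e)=A_\e[f_\e^{-1}]_0$, through which $\uCM^{\ZZ}(A_\e)\cong\Db(\mod C(A_\e))$. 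This is indeed the strategy of the cited proof in \cite{MU}.

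However, as written the argument has a genuine gap: the equivalence is never actually constructed. You offer two ``natural choices'' of framework and then assert that the remaining verification is ``direct $\FF_2$ bookkeeping,'' but that verification is the entire content of the lemma. Moreover, the one concrete device you name --- \emph{conjugation} by $x_u$ --- cannot work by itself: conjugation by an invertible element is an inner automorphism of $C(A_\e)$ and so cannot change its presentation, and in any case $x_ux_jx_u^{-1}=-x_j$ for every $j\neq u$, which acts trivially on all the quadratic relations. What is actually needed is the (non-inner) substitution that sends the generator attached to $v$ to the odd-length product built from $x_v$, $x_u$, $x_w$ (your $x_ux_w$ is exactly the right correction factor, multiplied onto $x_v$, not conjugated), fixes the remaining generators, and is then checked to preserve all defining relations of $C(A_{\e'})$ --- including the square of the new generator and its commutation with $x_u$ and $x_w$ themselves, which is where the hypotheses $v,w\neq u$ and the isolatedness of $u$ are used. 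Until that map is written down and those relations are verified, the proof is a plan rather than a proof.
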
 

Next, we recall two ways to reduce the number of variables in computing the stable category of
graded maximal Cohen--Macaulay modules over $A_\e$.

\begin{defi}
An \emph{isolated edge} $vw$ of a graph $G$ is an edge $vw \in E(G)$ such that $N_G(v)=\{w\}$ and $N_G(w)=\{v\}$. 
\end{defi} 

\begin{lem} [Kn\"orrer's Reduction, {\cite[Lemma 6.17]{MU}}] \label{lem.R2} 
Suppose that $vw$ is an isolated edge in $G_{\e}$. If $G_{\e'}=G_{\e}\setminus \{v, w\}$, 
then 
$\uCM^{\ZZ}(A_{\e})\cong \uCM^{\ZZ}(A_{\e'})$. 
\end{lem}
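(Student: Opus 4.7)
The plan is to reduce this to a Knörrer-type periodicity via an explicit change of variables. The isolated-edge hypothesis translates exactly as $\e_{vw}=1$ and $\e_{vi}=\e_{wi}=-1$ for every $i\neq v,w$: the two generators $x_v$ and $x_w$ commute with each other but anti-commute with every other generator of $S_\e$.

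First I would choose $\sqrt{-1}\in k$ (available because $k$ is algebraically closed of characteristic different from $2$) and introduce the new generators
\[
y:=x_v+\sqrt{-1}\,x_w,\qquad z:=x_v-\sqrt{-1}\,x_w.
\]
A direct computation shows that $y$ and $z$ still commute with each other, still anti-commute with every $x_j$ for $j\neq v,w$, and satisfy the hyperbolic identity $yz=zy=x_v^2+x_w^2$. Under this change of variables, the central defining element becomes
\[
f_\e \;=\; f_{\e'} + yz,
\]
where $f_{\e'}=\sum_{j\neq v,w}x_j^2$ is the central regular element of $S_{\e'}$ defining $A_{\e'}$. In other words, $A_\e$ is presented over $S_{\e'}$ by adjoining a ``hyperbolic pair'' $(y,z)$ subject to the single relation $yz=-f_{\e'}$.

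The second step is to run Knörrer's matrix-factorization argument in this $(\pm1)$-skew setting. From a graded matrix factorization $(\varphi,\psi)$ of $f_{\e'}$ over $S_{\e'}$ one assembles a $2\times 2$ block matrix factorization of $f_\e=f_{\e'}+yz$ over $S_\e$ whose off-diagonal blocks involve $y$ and $z$, directly mirroring Knörrer's classical construction. After checking that the resulting block matrices multiply to $f_\e\cdot\mathrm{id}$, the assignment $(\varphi,\psi)\mapsto\Phi$ induces a functor $\uCM^{\ZZ}(A_{\e'})\to\uCM^{\ZZ}(A_\e)$, and one establishes full-faithfulness and essential surjectivity on the stable level in parallel to the classical proof to conclude that it is a triangle equivalence.

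The main obstacle is the sign bookkeeping in the block construction: because $y$ and $z$ anti-commute with the $x_j$'s, the naive Knörrer block formula does not immediately give a matrix factorization, and the off-diagonal entries must be twisted by Koszul-type signs depending on the $\ZZ/2$-parity of the total degree of each entry of $\varphi$ and $\psi$. Once these sign corrections are installed the cross terms cancel, and the rest of the argument proceeds in parallel with Knörrer's proof in the commutative case.
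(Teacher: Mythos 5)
The paper does not actually prove this lemma: it is quoted verbatim from \cite[Lemma 6.17]{MU}, where it is deduced from the noncommutative Kn\"orrer periodicity theorem that is the main result of that reference. So the comparison has to be with the proof in \cite{MU}. Your first step is exactly right and is the same observation that drives the argument there: the isolated-edge hypothesis says precisely that $x_v,x_w$ commute with each other and anticommute with every other generator, so $S_\e$ is obtained from $S_{\e'}$ by adjoining a hyperbolic pair with $f_\e=f_{\e'}+x_v^2+x_w^2$, and your substitution $y=x_v+\sqrt{-1}\,x_w$, $z=x_v-\sqrt{-1}\,x_w$ correctly converts this to $f_{\e'}+yz$ with $yz=zy$ and with $y,z$ anticommuting with the remaining variables.

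The gap is in the second step. ``Run Kn\"orrer's matrix-factorization argument with Koszul sign corrections'' is not a proof but a restatement of the theorem you need: the claim that the sign-twisted block construction gives a well-defined functor $\uCM^{\ZZ}(A_{\e'})\to\uCM^{\ZZ}(A_\e)$ that is fully faithful and essentially surjective \emph{is} the noncommutative Kn\"orrer periodicity theorem, and establishing it occupies the bulk of \cite{MU}. Two points in particular cannot be waved through ``in parallel to the classical proof.'' First, the Koszul sign corrections only close up uniformly if the entries of $\varphi$ and $\psi$ have uniform parity; to arrange this one either needs the fact that every object of $\uCM^{\ZZ}(A_{\e'})$ is, up to shift and syzygy, represented by a matrix factorization with linear entries (a Smith--Van den Bergh type statement for Koszul quadrics), or one must set up a genuinely twisted matrix-factorization formalism in which the factors carry automorphisms, as is done in \cite{MU}. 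Second, the classical arguments for full faithfulness and essential surjectivity (the double branched cover with its $\ZZ/2$-action, or the cokernel analysis of the block matrices) use commutativity in essential ways and must be reworked in the graded noncommutative setting. As an outline your proposal identifies the correct reduction and matches the strategy of the cited source, but the actual content of the lemma --- the equivalence itself --- is asserted rather than proven.
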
 

\begin{lem} [Two Points Reduction, {\cite[Lemma 6.18]{MU}}] \label{lem.R1} 
Suppose that $v, w \in V(G_{\e})$ are two distinct isolated vertices.
If $G_{\e'}=G_{\e}\setminus \{v\}$, then $\uCM^{\ZZ}(A_{\e}) \cong \uCM^{\ZZ}(A_{\e'}) \times \uCM^{\ZZ}(A_{\e'})$.
\end{lem}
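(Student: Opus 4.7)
My plan is to identify $\uCM^{\ZZ}(A_\e)$ with the derived category of modules over a finite-dimensional ``noncommutative Clifford algebra'' $C_\e$ attached to the pair $(S_\e, f_\e)$, and then to exhibit an algebra splitting $C_\e \cong C_{\e'} \times C_{\e'}$ when both $v$ and $w$ are isolated vertices.

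I would first invoke the Smith--Van~den~Bergh description \cite{SV}: since $A_\e$ is a noncommutative quadric hypersurface (with $S_\e$ Koszul AS-regular and $f_\e$ a regular central quadratic), there is a triangle equivalence $\uCM^{\ZZ}(A_\e) \simeq \Db(\mod C_\e)$, where $C_\e$ is realized as the even part of the Clifford-type algebra of $(S_\e, f_\e)$. Concretely, $C_\e$ has generators $e_1,\dots,e_n$ subject to Koszul-dual commutation relations $e_i e_j + \e_{ij} e_j e_i = 0$ for $i\neq j$, together with quadratic relations encoding $f_\e$.

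Next I would exploit the isolation of $v$: because $\e_{vi}=-1$ for every $i\neq v$, the relation $e_v e_i + \e_{vi} e_i e_v = 0$ becomes $e_v e_i = e_i e_v$, so $e_v$ is central in $C_\e$. After a suitable normalization, the quadratic relation yields $e_v^2 = 1$; since $\operatorname{char} k \neq 2$, the elements $e^{\pm} := \tfrac12(1 \pm e_v)$ are central orthogonal idempotents summing to $1$, giving a $k$-algebra decomposition $C_\e \cong e^+ C_\e \times e^- C_\e$. In each factor $e^\pm C_\e$ the generator $e_v$ acts as $\pm 1$ and may be eliminated, and the surviving generators $\{e_i\}_{i\neq v}$ satisfy the defining relations of $C_{\e'}$, yielding $e^\pm C_\e \cong C_{\e'}$ and hence $C_\e \cong C_{\e'} \times C_{\e'}$. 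Chaining the equivalences,
\[ \uCM^{\ZZ}(A_\e) \simeq \Db(\mod C_\e) \simeq \Db(\mod C_{\e'}) \times \Db(\mod C_{\e'}) \simeq \uCM^{\ZZ}(A_{\e'})^{\times 2}. \]

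The main obstacle will be the clean identification $e^\pm C_\e \cong C_{\e'}$: while the relations among the remaining $e_i$ match trivially, one must verify that the quadratic relation induced by $f_\e$ restricts to the one induced by $f_{\e'} = f_\e - x_v^2$ and that the $\ZZ$-grading is respected. This is where the second isolated vertex $w$ enters essentially: its isolation guarantees that $e_w$ is already central in $C_\e$ and that the $x_v^2$-piece absorbed by the substitution $e_v = \pm 1$ does not interact with $e_w$'s quadratic relation, so a genuine untwisted copy of $C_{\e'}$ emerges rather than a twisted variant. A concrete alternative route is to work directly with graded matrix factorizations: given an MF of $f_{\e'}$ over $S_{\e'}$, the isolation of $v$ ensures that $x_v$ anticommutes with the odd-degree entries of the factorization, letting one build two lifts (indexed by a sign coming from $x_w$) to MFs of $f_\e$ over $S_\e$; essential surjectivity of this construction then gives the same doubling.
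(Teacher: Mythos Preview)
The paper does not give its own proof of this lemma; it is quoted from \cite[Lemma~6.18]{MU}. Your overall strategy---pass to the finite-dimensional algebra $C(A_\e)$ via \cite{SV} and split it using a central idempotent---is indeed the route taken there. However, your execution contains a genuine gap, and it is precisely where the second isolated vertex $w$ enters.

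You correctly state that $C_\e$ is the \emph{even} (degree-zero) part of the Clifford-type algebra, but you then treat $e_v$ and the idempotents $\tfrac{1}{2}(1\pm e_v)$ as elements of $C_\e$. They are not: each $e_i$ has odd degree, so $e_v\notin C_\e$ and your idempotents do not live in $C_\e$ at all. The correct argument uses the product $e_v e_w$. Since both $v$ and $w$ are isolated, both $e_v$ and $e_w$ are central in the full Clifford-type algebra; hence $e_v e_w$ is a central \emph{even} element with $(e_v e_w)^2=1$, and $\tfrac{1}{2}(1\pm e_v e_w)$ are honest central idempotents in $C_\e$. In each factor one has $e_v=\pm e_w$, so every even monomial in $e_1,\dots,e_n$ rewrites as an even monomial in the $e_i$ with $i\neq v$; this is what identifies each factor with $C_{\e'}$.

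Your stated reason for needing $w$ (avoiding a ``twisted variant'' after substituting $e_v=\pm 1$) misses the point, and without $w$ the conclusion is actually false. For $n=3$ with $G_\e$ the edge $12$ together with the single isolated vertex $3$, one has $\uCM^{\ZZ}(A_\e)\cong\Db(\mod k)$ (apply Kn\"orrer reduction to the isolated edge $12$), whereas $G_{\e'}=G_\e\setminus\{3\}$ gives $\uCM^{\ZZ}(A_{\e'})\cong\Db(\mod k^2)$, so $\uCM^{\ZZ}(A_\e)\not\cong\uCM^{\ZZ}(A_{\e'})^{\times 2}$. Thus the hypothesis on $w$ is not a technicality your argument can patch after the fact; it must be built into the construction of the idempotent from the start.
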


\section{Proofs of Theorems \ref{thm1}, \ref{thm2}, and \ref{thm3}}

In this section, we present proofs of Theorems \ref{thm1}, \ref{thm2}, and \ref{thm3}.

\begin{lem}\label{lem.mu}
Let $G$ be a graph and $v$ its vertex. Then there exists a sequence of mutations $\mu_{v_1},\ldots,\mu_{v_m}$ such that 
$v$ becomes an isolated vertex in $\mu_{v_m}(\mu_{v_{m-1}}( \cdots \mu_{v_1}(G) \cdots ))$. 
\end{lem}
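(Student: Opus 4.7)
The plan is to give a direct, explicit construction. Let $u_1, u_2, \ldots, u_k$ be an enumeration of $N_G(v)$. I claim that applying $\mu_{u_1}, \mu_{u_2}, \ldots, \mu_{u_k}$ in order makes $v$ isolated, so one can take $m = k$ and $v_i = u_i$.

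The key observation, immediate from the definition $E(\mu_u(G)) = \{uw \mid w \in V(G) \setminus N_G(u)\} \cup E(G \setminus \{u\})$, is that for any vertex $u \neq v$, the mutation $\mu_u$ leaves every edge not incident to $u$ unchanged. Consequently, the \emph{only} edge incident to $v$ that $\mu_u$ can possibly alter is the single edge $uv$, and it is toggled according to whether $v \in N_G(u)$ or not. Any other edge $vw$ with $w \neq u$ is preserved exactly.

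With this observation, I would prove by induction on $i$ that, setting $G_0 = G$ and $G_i = \mu_{u_i}(G_{i-1})$, one has $N_{G_i}(v) = \{u_{i+1}, \ldots, u_k\}$. The base case $i = 0$ is the definition. For the inductive step, by hypothesis $u_i v \in E(G_{i-1})$, so $v \in N_{G_{i-1}}(u_i)$; hence by definition $u_i v \notin E(G_i)$. The remaining edges $u_{i+1} v, \ldots, u_k v$ are not incident to $u_i$, so they survive in $G_i$. No spurious edge $vw$ with $w \notin \{u_1, \ldots, u_k\}$ can appear either: if $w \neq u_i$ then $vw$ is unchanged, and the case $w = u_i$ was just handled. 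After $k$ steps, $N_{G_k}(v) = \emptyset$ and $v$ is isolated.

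There is no serious obstacle; the whole argument is the bookkeeping observation that mutation at $u$ acts locally on the edges incident to $u$, so we can "peel off" the neighbors of $v$ one at a time in any order we choose.
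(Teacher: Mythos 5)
Your proposal is correct and is exactly the paper's argument: the paper's proof simply says to apply the mutations at all vertices of $N_G(v)$, which is precisely your construction. Your inductive bookkeeping (that $\mu_{u_i}$ only alters edges incident to $u_i$, hence only toggles off the single edge $u_i v$ among those at $v$) is a valid elaboration of the same idea.
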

\begin{proof}
We may apply the mutations at all $u$'s in $N_G(v)$. (See Example~\ref{ex.m} (2) for an example.) 
\end{proof}

Given two non-negative integers $a$ and $b$, let $G(a,b)$ denote the graph on the set of vertices $\{u_i,u_i' \mid i=1,\ldots,a\} \cup \{u_j'' \mid j = 1,\ldots,b\}$ 
with its set of edges $\{u_iu_i' \mid i=1,\ldots,a\}$. Namely, $G(a,b)$ consists of $a$ isolated edges and $b$ isolated vertices. 

\begin{lem}\label{lem.rm}
Let $G$ be a graph with $n$ vertices having at least one isolated vertex. 
Then there exists a sequence of relative mutations $\mu_{v_1 \leftarrow w_1}, \ldots, \mu_{v_k \leftarrow w_k}$ such that 
$$\mu_{v_k \leftarrow w_k}(\mu_{v_{k-1} \leftarrow w_{k-1}}( \cdots \mu_{v_1 \leftarrow w_1}(G) \cdots )) = G(\alpha,\beta),$$ 
where $2\alpha+\beta=n$ and $\beta >0$. 
\end{lem}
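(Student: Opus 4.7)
The plan is to prove the lemma by induction on $n$, peeling off an isolated edge at each step while keeping the given isolated vertex $u$ fixed. The base case is trivial: if $G$ has no edges, then $G$ already equals $G(0,n)$. For the inductive step, assume $G$ contains an edge $xy\in E(G)$; since $u$ is isolated, $u\notin\{x,y\}$.

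The core of the argument is a single technical claim: using only relative mutations at vertices in $V(G)\setminus\{x,y\}$, we can convert $G$ into a graph $G'$ in which $xy$ is an isolated edge and $u$ is still an isolated vertex. Granting the claim, the induced subgraph of $G'$ on $V(G)\setminus\{x,y\}$ has $n-2$ vertices and still contains the isolated vertex $u$, so the inductive hypothesis furnishes a further sequence of relative mutations at vertices of $V(G)\setminus\{x,y\}$ that reduces it to $G(\alpha',\beta')$ with $2\alpha'+\beta'=n-2$ and $\beta'>0$. These further mutations only modify the rows and columns of vertices outside $\{x,y\}$, so they preserve the isolated edge $xy$, and we end with $G(\alpha'+1,\beta')$ where $2(\alpha'+1)+\beta'=n$ and $\beta'>0$, exactly as desired.

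To establish the claim, I would process each $z\in V(G)\setminus\{x,y\}$ once, choosing mutations by the pair $(a,b):=(\mathbf{1}[xz\in E(G)],\mathbf{1}[yz\in E(G)])$: do nothing if $(0,0)$, apply $\mu_{z\leftarrow y}$ if $(1,0)$, apply $\mu_{z\leftarrow x}$ if $(0,1)$, and apply $\mu_{z\leftarrow x}$ followed by $\mu_{z\leftarrow y}$ if $(1,1)$. Unpacking the definition, $\mu_{z\leftarrow w}$ replaces $N(z)$ by $(N(z)\triangle N(w))\setminus\{z\}$ and alters no other incidences; combined with $x\in N(y)$, $y\in N(x)$, $x\notin N(x)$, $y\notin N(y)$, a small case-by-case check verifies that in every case the resulting graph has $xz,yz\notin E$. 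The locality of $\mu_{z\leftarrow w}$ (only edges incident to $z$ change) then has three consequences at once: $xy$ is preserved throughout, the edges $xz'$ and $yz'$ for $z'\neq z$ are preserved so the processing order is irrelevant, and $u$ never has its neighborhood changed (the $(0,0)$ case is the only one that arises when $z=u$ anyway), so at every stage $u$ is an isolated vertex and the hypothesis of the Relative Mutation Lemma remains satisfied.

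The only nontrivial case to verify is $(1,1)$: one needs to see that $\mu_{z\leftarrow y}$ applied after $\mu_{z\leftarrow x}$ does not restore any of the edges the first mutation killed. This follows by direct computation from the recursion $N_{\mu_{z\leftarrow w}(G)}(z)=(N(z)\triangle N(w))\setminus\{z\}$ and the parity identities $x\in N(y)$, $y\notin N(y)$, $y\in N(x)$, $x\notin N(x)$. Apart from this case check, the argument is essentially bookkeeping guided by the locality property, so I anticipate no other serious obstacles.
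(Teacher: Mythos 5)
Your proof is correct, and it organizes the induction differently from the paper's. The paper deletes a single non-isolated vertex $v$, applies the inductive hypothesis to $G\setminus\{v\}$ to reach $G(\alpha',\beta')$, and only then cleans up the edges from $v$ into that normalized graph; the cleanup is easy precisely because the neighborhoods in $G(\alpha',\beta')$ are singletons or empty, and the case analysis there directly shows how $\beta$ evolves (the new vertex either pairs with an isolated vertex to give $G(\alpha'+1,\beta'-1)$ or becomes isolated to give $G(\alpha',\beta'+1)$). You instead do the cleanup first: you fix an edge $xy$, use the toggles $x\in N(y)$, $y\in N(x)$ to detach every other vertex from $\{x,y\}$ via the symmetric-difference formula $N_{\mu_{z\leftarrow w}(G)}(z)=(N(z)\triangle N(w))\setminus\{z\}$, and then delete the resulting isolated edge before recursing. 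Both arguments rest on the same two facts --- the symmetric-difference description of relative mutation and its locality (only edges at the mutated vertex change, so an isolated vertex distinct from the mutated ones stays isolated) --- so the mechanisms are the same; your version trades the paper's structural description of the intermediate graph $\widetilde G$ for a purely local, per-vertex case check $(a,b)\in\{0,1\}^2$, which makes the order-independence and the preservation of the isolated vertex $u$ immediate, while the paper's version makes the bookkeeping of $\alpha$ and $\beta$ slightly more visible. Your verification of the only delicate case $(1,1)$, where a single relative mutation does not suffice and the composite $\mu_{z\leftarrow y}\circ\mu_{z\leftarrow x}$ must be checked not to restore the killed edge, is the right point to isolate and is carried out correctly.
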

\begin{proof}
We prove the statement by induction on $n$. The statement is trivial in the case $n=1$ since $G$ is already equal to $G(0,1)$. 

Suppose that $n>1$. Take an isolated vertex $v_0$ of $G$. We fix a vertex $v$ in $G$ with $v \neq v_0$ and we let $G' = G \setminus \{v\}$. 
By the hypothesis of induction, there exists a sequence of relative mutations on $G'$ 
such that $G'$ can be transformed into $G(\alpha',\beta')$ for some $\alpha' \in \ZZ_{\geq 0}$ and $\beta' \in \ZZ_{>0}$. 
Let $\widetilde{G}$ be the graph after applying those relative mutations to $G$. 
Then $\widetilde{G} \setminus \{v\} = G(\alpha',\beta')$. Note that $v_0$ is still an isolated vertex in $\widetilde{G}$. 

Let $\{u_i,u_i' \mid i=1,\ldots,\alpha'\} \cup \{u_j'' \mid j = 1,\ldots,\beta'-1\} \cup \{v_0\}$ 
denote the set of vertices of $\widetilde{G} \setminus \{v\}$ 
and let $\{u_iu_i' \mid i=1,\ldots,\alpha'\}$ be the set of its edges. We let the set of edges of $\widetilde{G}$ as follows: 
\begin{align*}
&\{vu_i,u_iu_i' \mid i=1,\ldots,p\} \cup \{vu_i, vu_i', u_iu_i' \mid i=p+1,\ldots,q\} \cup \{u_iu_i' \mid i=q+1,\ldots,\alpha'\} \\
&\cup \{vu_i'' \mid i=1,\ldots,r\}, 
\end{align*}
where $0 \leq p \leq q \leq \alpha'$ and $0 \leq r \leq \beta'-1$. Then, by applying 
\begin{align*}
\underbrace{\mu_{v \leftarrow u_1'},\ldots,\mu_{v \leftarrow u_p'}}_\text{breaks $vu_i$ for $i=1,\ldots,p$}, \quad
\underbrace{\mu_{v \leftarrow u_{p+1}}, \mu_{v \leftarrow u_{p+1}'},\ldots,\mu_{v \leftarrow u_q}, \mu_{v \leftarrow u_q'}}_\text{breaks $vu_i$ and $vu_i'$ for $i=p+1,\ldots,q$}, \quad
\underbrace{\mu_{u_2'' \leftarrow u_1''},\ldots,\mu_{u_r'' \leftarrow u_1''}}_\text{breaks $vu_j''$ for $j=2,\ldots,r$ if $r \geq 2$}, 
\end{align*}
$\widetilde{G}$ eventually becomes the graph whose edge set is 
$$\{u_iu_i' \mid i=1,\ldots,\alpha'\} \cup \{vu_1''\} \text{ if $r \geq 1$} \quad \text{or}\quad \{u_iu_i' \mid i=1,\ldots,\alpha'\}\text{ if $r=0$}.$$ 
This is nothing but $G(\alpha'+1,\beta'-1)$ if $r \geq 1$ (i.e. $\beta' \geq 2$) and $G(\alpha',\beta'+1)$ if $r=0$. 
See the figure below.

\begin{align*}&\xymatrix@R=0.8pc@C=0.5pc{
&&&&v \ar@{-}[ddllll] \ar@{-}[ddll] \ar@{-}[ddl] \ar@{-}[ddddl] \ar@{-}[ddr] \ar@{-}[ddddr] \ar@{-}@/^25pt/[dddrrrrrr] \ar@{-}@/^25pt/[dddrrrrrrrr]
&&&&&&&& v_0\\
\\
u_1 \ar@{-}[dd]& &u_p \ar@{-}[dd] &u_{p+1} \ar@{-}[dd]&&u_q \ar@{-}[dd] &u_{q+1} \ar@{-}[dd] & &u_{\alpha'} \ar@{-}[dd]\\
 &\cdots &&&\cdots &&&\cdots &&&u''_1 &\cdots &u''_r &u''_{r+1} &\cdots &u''_{\beta'-1}\\
u_1' & &u_p'&u_{p+1}'&&u_q'&u_{q+1}' &&u'_{\alpha'}
}\\ 
& \qquad\qquad\qquad\qquad\qquad \text{\rotatebox[origin=c]{270}{$\leadsto$} relative mutations}\\
&\xymatrix@R=0.8pc@C=0.5pc{
&&&&v \ar@{-}@/^25pt/[dddrrrrrr] &&&&&&&& v_0\\
\\
u_1 \ar@{-}[dd]& &u_p \ar@{-}[dd] &u_{p+1} \ar@{-}[dd]&&u_q \ar@{-}[dd] &u_{q+1} \ar@{-}[dd] & &u_{\alpha'} \ar@{-}[dd]\\
 &\cdots &&&\cdots &&&\cdots &&&u''_1 &\cdots &u''_r &u''_{r+1} &\cdots &u''_{\beta'-1}\\
u_1' & &u_p'&u_{p+1}'&&u_q'&u_{q+1}' &&u'_{\alpha'}\\
}
\end{align*}
\end{proof}

Using Lemmas~\ref{lem.mu}~and~\ref{lem.rm}, we can prove Theorem~\ref{thm1}.

\begin{proof}[\textit{\textbf{Proof of Theorem \ref{thm1}}}]
By Lemma~\ref{lem.mu}, we can transform $G_\e$ into a graph $G_{\e'}$ having at least one isolated vertex by using mutations several times.
Moreover, by Lemma~\ref{lem.rm}, it follows that $G_{\e'}$ can be transformed into $G_{\e''}:= G(\alpha, \beta)$ with $\alpha\geq 0, \beta>0$ 
by using relative mutations several times. 
It is easy to see that $G_{\e''}$ can be reduced to the one-vertex graph by applying Kn\"orrer's reductions $\alpha$ times and two points reductions $(\beta-1)$ times.
Hence we see that every $G_\e$ can be reduced to the one-vertex graph using two mutations and two reductions. In addition, we have 
\[ \uCM^{\ZZ}(A_{\e}) \cong \uCM^{\ZZ}(A_{\e'}) \cong \uCM^{\ZZ}(A_{\e''}) \cong \uCM^{\ZZ}(k[x]/(x^2))^{\times 2^{\beta-1}}
\cong \Db(\mod k)^{\times 2^{\beta-1}} \cong \Db(\mod k^{2^{\beta-1}}).\]
\end{proof}

For a matrix $M$ with its entry in $\FF_2$, let $\rank_{\FF_2}(M)$ (resp. $\mnull_{\FF_2}(M)$) denote the rank (resp. the nullity) of $M$ over $\FF_2$, which is called the \emph{binary rank} (resp. the \emph{binary nullity}).

For a graph $G$, let $M(G)$ denote the adjacency matrix of $G$. 
We denote by $X(G)$ the adjacency matrix of the graph whose vertex set is $V(G) \cup \{v'\}$ with its edge set $E(G) \cup \{vv' \mid v \in V(G)\}$. 
Note that $X(G)$ looks like as follows: $$X(G)=\begin{pmatrix} 
 & & &1 \\
 &M(G) & &\vdots \\
 & & &1 \\
1 &\cdots &1 &0
\end{pmatrix}.$$ 
In what follows, we will regard each entry of $M(G)$ and $X(G)$ as an element of $\FF_2$. 
\begin{lem}\label{lem.rank}
Work with the same situation and notation as in Lemma~\ref{lem.rm}. Then we have $$\rank_{\FF_2}(X(G))=2\alpha+2.$$
\end{lem}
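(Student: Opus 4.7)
The plan is to prove the rank formula in two steps. First, I would show that $\rank_{\FF_2}(X(G))$ is invariant under the relative mutations of Lemma~\ref{lem.rm}; second, I would compute $\rank_{\FF_2}(X(G(\alpha,\beta)))$ directly, and then combine.

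For the invariance step, fix an isolated vertex $u$ of $G$ and distinct vertices $v,w\neq u$. Let $P$ be the $(n+1)\times(n+1)$ unipotent matrix over $\FF_2$ obtained from the identity by inserting $1$'s in the entries $(v,u)$ and $(v,w)$, so that left multiplication by $P$ replaces row $v$ by the sum of rows $v$, $u$, and $w$ (and right multiplication by $P^T$ performs the symmetric column operation). I claim that
\[
P\, X(G)\, P^T \;=\; X(\mu_{v\leftarrow w}(G))
\]
over $\FF_2$. Entries outside row/column $v$ are unchanged, matching the fact that $\mu_{v\leftarrow w}$ only affects edges incident to $v$. Along row $v$, within the first $n$ coordinates the operation produces $[vj\in E(G)]+[wj\in E(G)]+[uj\in E(G)]$, which equals $[j\in N_G(v)\triangle N_G(w)]$ since the $u$-term vanishes (as $u$ is isolated); this is exactly $M(\mu_{v\leftarrow w}(G))_{vj}$. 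In the last coordinate, rows $v$, $w$, and $u$ each contribute $1$, yielding $1+1+1=1$, the correct augmented value. The diagonal entry $(v,v)$ vanishes because each off-diagonal contribution gets counted twice. Since $P$ is invertible, this is a congruence and preserves $\FF_2$-rank. The isolated vertex $v_0$ singled out in the proof of Lemma~\ref{lem.rm} remains isolated under every subsequent mutation, so the invariance propagates through the whole sequence and yields $\rank_{\FF_2}(X(G)) = \rank_{\FF_2}(X(G(\alpha,\beta)))$.

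For the normal-form computation, $M(G(\alpha,\beta))$ is block diagonal with $\alpha$ copies of $\bigl(\begin{smallmatrix}0&1\\1&0\end{smallmatrix}\bigr)$ and $\beta$ zero blocks, hence has $\FF_2$-rank $2\alpha$. The columns of $X(G(\alpha,\beta))$ are $e_{k'}+e_{n+1}$ for each edge endpoint $k$ (with partner $k'$), $e_{n+1}$ for each isolated vertex, and $\sum_{k=1}^n e_k$ for the final column. Because $\beta>0$, the isolated-vertex column supplies $e_{n+1}$, after which the edge columns yield each $e_k$ for edge endpoints $k$; together these give $2\alpha+1$ independent vectors in the column span. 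The final column reduces modulo these to $\sum_{k\text{ isolated}}e_k\neq 0$, which is independent of the preceding vectors and contributes the last dimension. Therefore $\rank_{\FF_2}(X(G(\alpha,\beta)))=2\alpha+2$.

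The main obstacle is identifying the correct congruence $P$. Adding only row/column $w$ to row/column $v$ reproduces the correct combinatorial behaviour in the first $n$ coordinates, but flips the augmented entry to $1+1=0$, which is the wrong value. The isolated vertex $u$ contributes row $(0,\ldots,0,1)$ to $X(G)$, so including the row/column-$u$ correction restores the last coordinate while leaving the other coordinates unchanged (thanks to $M(G)_{u,*}=0$). This small but essential adjustment is what allows the congruence argument to go through, and it is precisely why the isolated-vertex hypothesis of Lemma~\ref{lem.rm} is required.
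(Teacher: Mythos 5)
Your proposal is correct and follows essentially the same route as the paper: the congruence matrix $P=E+E_{v,u}+E_{v,w}$ you construct is exactly the paper's $(E_{i,j}+E_{i,n}+E)$ realizing each relative mutation as an $\FF_2$-congruence on $X(G)$ (with the isolated vertex supplying the correction in the augmented coordinate), followed by a direct rank computation for $X(G(\alpha,\beta))$. Your write-up merely spells out in more detail the verification of the congruence identity and the column-space argument that the paper leaves as ``we can easily see.''
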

\begin{proof}
By definition of mutation,  we can observe the following: 
\begin{align*}
(E_{i,n+1}+E)X(G)(E_{n+1,i}+E)&=\begin{pmatrix} 
 & & &1 \\
 &M(\mu_{i}(G)) & &\vdots \\
 & & &1 \\
1 &\cdots &1 &0
\end{pmatrix}, 
\end{align*}
where $E_{i,j}$ is the matrix such that $(i,j)$-entry is $1$ and the other entries are all $0$, and $E$ is the identity matrix. 
(Note that this holds true without assuming that $G$ has an isolated vertex.)
Since $G$ has an isolated vertex, we may assume that $n$-th row (resp. $n$-th column) of $X(G)$ is $\left(\begin{smallmatrix} 0 &\cdots &0 &1 \end{smallmatrix}\right)$ (resp. $\left(\begin{smallmatrix} 0 \\ \vdots \\0 \\1 \end{smallmatrix}\right)$).
Then, by definition of relative mutation, we can observe the following:
\begin{align*}
(E_{i,j}+E_{i,n}+E)X(G)(E_{j,i}+E_{n,i}+E)&=\begin{pmatrix} 
 & & &1 \\
 &M(\mu_{i \leftarrow j}(G)) & &\vdots \\
 & & &1 \\
1 &\cdots &1 &0
\end{pmatrix}. 
\end{align*}
By Lemmas~\ref{lem.mu} and \ref{lem.rm} together with the above observation, we see that there exists a sequence of regular matrices $P_1,\ldots,P_N,Q_1,\ldots,Q_N$ such that 
$$Q_N \cdots Q_1 X(G) P_1 \cdots P_N=
\begin{pmatrix}
 & & &1 \\
 &\underbrace{A \oplus \cdots \oplus A}_\alpha \oplus O & &\vdots \\
 & & &1 \\
1 &\cdots &1 &0 \end{pmatrix},$$
where $A=\begin{pmatrix}0 &1 \\ 1 &0 \end{pmatrix}$ and $O$ denotes the zero matrix of size ${n-2\alpha}$. 
We can easily see that $\rank_{\FF_2}(X(G))=\rank_{\FF_2}(Q_N \cdots Q_1 X(G) P_1 \cdots P_N)=2\alpha+2$, as required. 
\end{proof}

\begin{rem}
In \cite[Theorem 8.10.2]{GR}, an interpretation of the binary rank of the adjacency matrix of a graph $G$ 
was given in terms of a local operation for $G$, called a \emph{local complement}. 
A local complement seems to be related to a relative mutation, but it is not clear how they are related.
\end{rem}

Now we are ready to prove Theorems \ref{thm2} and \ref{thm3}.

\begin{proof}[\textit{\textbf{Proof of Theorem \ref{thm2}}}]
By the proof of Theorem~\ref{thm1}, it suffices to show that $\beta-1=\mnull_{\FF_2}(X(G_\e))$. 
Let $n$ be the number of vertices of $G_\e$. Then $\beta=n-2\alpha$. Therefore, 
\begin{align*}
\beta-1 &=n-2\alpha-1=(n+1)-(2\alpha+2)=n+1-\rank_{\FF_2}(X(G_\e)) \quad\text{(by Lemma~\ref{lem.rank})} \\
&=\mnull_{\FF_2}(X(G_\e)). 
\end{align*}
By the proof of \cite[Theorem 5.4]{MU}, it follows that $A_\e$ has $2^{\mnull_{\FF_2}(X(G_\e))}\ (=\#\mathrm{Ker}_{\FF_2}(X(G_\e)))$
indecomposable non-projective graded maximal Cohen--Macaulay modules up to isomorphism and degree shifts.
\end{proof}

\begin{proof}[\textit{\textbf{Proof of Theorem \ref{thm3}}}]
Since $A_\e$ is finite Cohen--Macaulay representation type by Theorem~\ref{thm2},
the result follows from \cite[Theorem 3.4]{Uis}.
\end{proof}

\begin{ex}
If
\[
G_\e=
\xy /r2pc/: 
{\xypolygon5{~={90}~*{\xypolynode}~>{}}},
"1";"2"**@{-},
"2";"3"**@{-},
"3";"4"**@{-},
"4";"1"**@{-},
"1";"5"**@{-}
\endxy,
\]
then
\[
\mnull_{\FF_2}
\begin{pmatrix}
0 &1 &0 &1 &1 &1 \\
1 &0 &1 &0 &0 &1 \\
0 &1 &0 &1 &0 &1 \\
1 &0 &1 &0 &0 &1 \\
1 &0 &0 &0 &0 &1 \\ 
1 &1 &1 &1 &1 &0
\end{pmatrix}
=2,
\]
so we have $\uCM^{\ZZ}(A_{\e}) \cong \Db(\mod k^4)$.

On the other hand, by applying the mutation $\mu_1$, the relative mutations $\mu_{2 \leftarrow 1}$ and $\mu_{4 \leftarrow 1}$, 
$G_\e$ becomes as follows: 
\[
\xy /r2pc/: 
{\xypolygon5{~={90}~*{\xypolynode}~>{}}},
"1";"3"**@{-}
\endxy
\]
Hence $\alpha=1$ and $\beta=3$ in the sense of Lemma~\ref{lem.rm}, i.e., $\mu_{4 \leftarrow 1}(\mu_{2 \leftarrow 1}(\mu_1(G_\e)))=G(1,3)$.
This also shows $\uCM^{\ZZ}(A_{\e}) \cong \Db(\mod k^4)$.
\end{ex}

\section{Proof of Theorem \ref{thm4}}

This section is devoted to the proof of Theorem~\ref{thm4}.

For a graph $G$, let $\Iso(G)$ be the set of isolated vertices of $G$ and let $i(G)=\#\Iso(G)$. 

For two graphs $G$ and $G'$, we write $G \sim G'$ if $G$ can be transformed into $G'$ by finitely many mutations and relative mutations. 
By Lemmas~\ref{lem.mu} and \ref{lem.rm}, we know that $G \sim G(\alpha,\beta)$ for some $\alpha \in \ZZ_{\geq 0}$ and $\beta \in \ZZ_{>0}$. 
Moreover, by Lemma~\ref{lem.rank}, we also see that $G(\alpha,\beta) \not\sim G(\alpha',\beta')$ if $\beta \neq \beta'$ and $\beta,\beta'>0$. 
(Note that this is not true if $\beta=0$ or $\beta'=0$. For example, $G(2,0) \sim G(1,2)$.) 

Here, we see the following: 
\begin{lem}\label{lem.max}
Assume that $G \sim G(\alpha,\beta)$ for $\alpha \in \ZZ_{\geq 0}$ and $\beta \in \ZZ_{>0}$. 
Then we have $i(G') \leq \beta$ for any graph $G'$ with $G \sim G'$. 
\end{lem}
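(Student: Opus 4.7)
The plan is to combine two observations: first, that the rank $\rank_{\FF_2}(X(\cdot))$ is invariant along the equivalence relation $\sim$, so $\rank_{\FF_2}(X(G')) = 2\alpha+2$ for every $G' \sim G$; and second, that each isolated vertex of $G'$ contributes a copy of the row $(0,\ldots,0,1)$ to $X(G')$, which caps the rank from above in terms of $i(G')$.

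First I would extract rank invariance from the proof of Lemma~\ref{lem.rank}. There, each mutation $\mu_i$ is realised as the conjugation $X(G) \mapsto (E_{i,n+1}+E)\, X(G)\,(E_{n+1,i}+E)$ by a unipotent, hence invertible, matrix. Each relative mutation $\mu_{i \leftarrow j}$ is realised as the conjugation $X(G) \mapsto (E_{i,j}+E_{i,n}+E)\, X(G)\,(E_{j,i}+E_{n,i}+E)$, again unipotent and thus invertible, provided the isolated vertex sits at position $n$; the latter can always be arranged by an additional permutation conjugation. Both types of operations therefore preserve $\rank_{\FF_2}(X(\cdot))$, so this rank is constant along $\sim$. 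Combined with Lemma~\ref{lem.rank} applied to $G$, we obtain $\rank_{\FF_2}(X(G')) = 2\alpha+2$.

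Next, set $t := i(G')$ and $n := |V(G')| = 2\alpha + \beta$. For every isolated vertex $v$ of $G'$, the $v$-th row of $X(G')$ has its entries in the columns indexed by $V(G')$ all zero (since $v$ has no neighbour in $G'$), while its last entry is $1$ (since $v$ is joined to the extra vertex $v'$ in the construction of $X$). Hence $X(G')$ contains $t$ identical rows, all equal to $(0,\ldots,0,1)$, producing $t-1$ linear dependencies and forcing
\[
\rank_{\FF_2}(X(G')) \le (n+1)-(t-1) = n-t+2.
\]
Combining the two estimates yields $2\alpha+2 \le 2\alpha+\beta-t+2$, that is, $t \le \beta$, which is exactly the desired conclusion.

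The main obstacle is purely notational: one must be sure the rank-invariance argument extends to every single relative mutation, not just to those appearing in the specific sequence used in Lemma~\ref{lem.rm} to transport $G$ to $G(\alpha,\beta)$. Once we allow an auxiliary permutation conjugation to bring an isolated vertex into position $n$ (legitimate because any graph in the $\sim$-orbit on which a relative mutation is applied must, by definition, possess an isolated vertex), the formulas imported from the proof of Lemma~\ref{lem.rank} handle the general case uniformly, and the remainder of the argument is a two-line rank inequality.
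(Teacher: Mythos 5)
Your argument is correct, but it takes a different route from the paper's. The paper's proof never touches the matrix $X(G')$ directly: it observes that a relative mutation $\mu_{v\leftarrow w}$ cannot destroy an isolated vertex unless $v$ itself is isolated, so the normalization of Lemma~\ref{lem.rm} carries $G'$ to some $G(\alpha',\beta')$ with $\beta'\geq i(G')$, and then invokes the fact (recorded just before the lemma, itself a consequence of Lemma~\ref{lem.rank}) that $G(\alpha,\beta)\sim G(\alpha',\beta')$ with $\beta,\beta'>0$ forces $\beta=\beta'$. You instead re-open the proof of Lemma~\ref{lem.rank} to extract rank invariance of $X(\cdot)$ along $\sim$ as a standalone fact, and then finish with the elementary observation that the $i(G')$ isolated vertices of $G'$ produce $i(G')$ identical rows $(0,\dots,0,1)$ in $X(G')$, whence $2\alpha+2=\rank_{\FF_2}(X(G'))\leq (n+1)-(i(G')-1)$ and $i(G')\leq\beta$. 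Both proofs ultimately rest on the same invariant, but yours short-circuits the normalization of $G'$ and replaces the bookkeeping of isolated vertices through a mutation sequence by a two-line rank estimate; the cost is that you must justify that \emph{every} relative mutation occurring in a $\sim$-chain is covered by the conjugation formula (your permutation-conjugation remark handles the position of the isolated vertex, and the relation $\sim$ is indeed only ever used with relative mutations applied in the presence of an isolated vertex, as required by Lemma~\ref{lem.M2}). Both arguments are sound; yours is arguably the more transparent one once rank invariance is isolated as its own statement.
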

\begin{proof}
If $i(G')=0$, then the result is clear, so assume that $i(G')\geq 1$.
Notice that any relative mutation $\mu_{v \leftarrow w}$ keeps the isolated vertices unchanged when $v$ is not an isolated vertex. 
Thus, by Lemma~\ref{lem.rm}, there is a sequence of relative mutations which transforms $G'$ into $G(\alpha',\beta')$ with $\beta' \geq i(G')$. 
Therefore, $$G(\alpha,\beta) \sim G \sim G' \sim G(\alpha',\beta') \text{ and }\beta' \geq i(G').$$ 
Suppose that $i(G')>\beta$. Then $\beta' \geq i(G') > \beta$, a contradiction. 
\end{proof}

We recall the following useful lemma on the point scheme $\calE_{\e}$. 
\begin{lem}[{cf. \cite[Theorem 2.3]{Ukp}}]
$\calE_{\e}=\bigcap _{\e_{ij}\e_{jk}\e_{ki}=-1}\calV(x_ix_jx_k)\subset \PP^{n-1}$. 
\end{lem}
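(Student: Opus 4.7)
The plan is to derive the description directly from the Artin--Tate--Van~den~Bergh construction of the point scheme applied to the Koszul algebra $S_\e$. Since $S_\e$ is quadratic with relations $r_{ij}=x_ix_j-\e_{ij}x_jx_i$ for $i<j$, the multilinearization of $r_{ij}$ at a pair $(p,q)\in\PP^{n-1}\times\PP^{n-1}$ is the bilinear form $M_{ij}(p,q)=p_iq_j-\e_{ij}p_jq_i$. The locus $\Gamma_\e\subset\PP^{n-1}\times\PP^{n-1}$ cut out by the $M_{ij}$ projects via the first factor onto $\calE_\e$, so the first task is to characterize those $p\in\PP^{n-1}$ admitting some $q\in\PP^{n-1}$ with $p_iq_j=\e_{ij}p_jq_i$ for all $i\ne j$.

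For such a $p$, let $S:=\{i\mid p_i\ne 0\}$. If $i\in S$ and $j\notin S$, the relation forces $p_iq_j=0$, hence $q_j=0$; thus $\mathrm{supp}(q)\subseteq S$. Inside $S$, the relation becomes $q_j/q_i=\e_{ij}(p_j/p_i)$, which determines every ratio of non-zero $q$-coordinates once any one of them is chosen. When $|S|\leq 2$ these ratios are vacuously consistent and a $q$ always exists (e.g.\ $q=p$ when $|S|=1$, and one free equation when $|S|=2$). When $|S|\geq 3$, consistency around a triple $\{i,j,k\}\subseteq S$ amounts to
\[
1=(q_j/q_i)(q_k/q_j)(q_i/q_k)=\e_{ij}\e_{jk}\e_{ki}\cdot(p_j/p_i)(p_k/p_j)(p_i/p_k)=\e_{ij}\e_{jk}\e_{ki},
\]
so a compatible $q$ exists if and only if $\e_{ij}\e_{jk}\e_{ki}=1$ for every triple in $S$.

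I would then translate this criterion into the stated variety description: $p\in\calE_\e$ if and only if no triple $\{i,j,k\}$ with $\e_{ij}\e_{jk}\e_{ki}=-1$ is contained in $\mathrm{supp}(p)$, equivalently $p_ip_jp_k=0$ for every such triple. This is exactly the condition $p\in\bigcap_{\e_{ij}\e_{jk}\e_{ki}=-1}\calV(x_ix_jx_k)$, yielding the displayed equality on closed points.

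The main obstacle is promoting this set-theoretic identity to an equality of closed subschemes of $\PP^{n-1}$, which requires matching ideals rather than just points. I would handle this either by checking directly that the ideal generated by the $3\times 3$-style minors extracted from the bilinear forms $M_{ij}$ (which govern the projection of $\Gamma_\e$) agrees with the cubic monomial ideal $(x_ix_jx_k : \e_{ij}\e_{jk}\e_{ki}=-1)$ -- a short elimination-theoretic argument using the Koszulity of $S_\e$ -- or, more economically, by invoking the analogous scheme-theoretic computation carried out in \cite[Theorem~2.3]{Ukp}, which is precisely what the ``cf.'' in the lemma signals. Either route completes the proof.
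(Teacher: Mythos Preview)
The paper itself gives no proof of this lemma; it simply records the statement and points to \cite[Theorem~2.3]{Ukp} via the ``cf.''\ tag. Your proposal, by contrast, actually works through the Artin--Tate--Van~den~Bergh description of the point scheme for $S_\e$: you compute the graph $\Gamma_\e\subset\PP^{n-1}\times\PP^{n-1}$ cut out by the multilinearized relations, characterize the image of the first projection via the support-and-cocycle argument, and correctly arrive at the condition $\e_{ij}\e_{jk}\e_{ki}=1$ for every triple in $\operatorname{supp}(p)$. That set-theoretic computation is clean and correct, and it is in fact all that is needed downstream in the paper, since Section~4 only uses the zero locus $\calV(I_{G_\e})=\calE_\e$ to count irreducible components isomorphic to $\PP^1$.

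Your caveat about upgrading to a scheme-theoretic identity is well placed but slightly overcautious here: the paper treats $\calE_\e$ purely as a variety (note the use of $\calV(-)$ throughout), so the set-level argument already suffices for everything that follows. Either way, your final fallback to \cite[Theorem~2.3]{Ukp} matches exactly what the paper does, so there is no gap.
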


Let $\ell_\e$ denote the number of irreducible components of $\calE_\e$ that are isomorphic to $\PP^1$. 
For the investigation of $\ell_\e$, we will recall some fundamental facts on the Stanley--Reisner ideals of simplicial complexes. 
Consult, e.g., \cite[Section 5]{BH}. 

Let $\Delta$ be a simplicial complex on the vertex set $V=\{x_1,\ldots,x_n\}$, i.e., 
$\Delta \subset 2^V$ satisfies ``$F \in \Delta, F' \subset F \Longrightarrow F' \in \Delta$''. 
A maximal $F \in \Delta$ is said to be a \emph{facet} of $\Delta$. 
We define the Stanley--Reisner ideal $I_\Delta \subset k[x_1,\ldots,x_n]$ of $\Delta$ as follows: 
$$I_\Delta=\left( \prod_{x_i \in F} x_i \mid F \in 2^V, \; F \not\in \Delta\right).$$
Clearly, $I_\Delta$ is a squarefree monomial ideal.
Conversely, every squarefree monomial ideal can be realized as a Stanley--Reisner ideal $I_\Delta$ for some $\Delta$. 

For a facet $F$ of $\Delta$, let $I_F$ be the prime ideal generated by all variables $x_i$ with $x_i \not\in F$, 
i.e., $I_F=(x_i \mid x_i \in V \setminus F)$. 
It is known that \begin{align}\label{decomp}I_\Delta = \bigcap_{F \text{ : facet of }\Delta}I_F.\end{align}

For the purpose of this section, for a graph $G$ on the vertex set $V:=V(G)=\{x_1,\ldots,x_n\}$ with its edge set $E:=E(G)$, 
we consider the squarefree monomial ideal $I_G$ defined as follows: 
\begin{align*}
I_G&=(x_ix_jx_k \mid x_ix_j \not\in E \text{ and } x_ix_k \not\in E \text{ and } x_jx_k \not\in E) \\
&+ (x_ix_jx_k \mid x_ix_j \in E \text{ and }x_ix_k \in E \text{ and }x_jx_k \not\in E).
\end{align*}
In fact, we can easily see that $\calV(I_{G_\e})=\calE_\e$. 

Let $\Delta_G$ be the associated simplicial complex, i.e., $I_{\Delta_G}=I_G$. 
We write $\ell(G)$ for the number of facets of $\Delta_G$ with cardinality $2$.
By the primary decomposition \eqref{decomp}, we see that
\[\ell_\e = \ell(G_\e),\]
so we focus on the calculation of $\ell(G)$. 
By definition of the Stanley--Reisner ideal, we have the following: 
\begin{align*}
\ell(G)&=\#\{\{x_i,x_j\} \subset  V \mid \{x_i,x_j,x_k\} \not\in \Delta_G \text{ for any } x_k \in V \setminus \{x_i,x_j\}\} \\
&=\#\{\{x_i,x_j\} \subset  V \mid x_ix_jx_k \in I_G \text{ for any } x_k \in V \setminus \{x_i,x_j\}\} \\
&=\#\{\{x_i,x_j\} \subset  V \mid x_ix_j \in E, \\ 
&\quad\quad\quad
\text{``}x_ix_k \in E \text{ and } x_jx_k \not\in E\text{''} \text{ or }
\text{``}x_ix_k \not\in E \text{ and } x_jx_k \in E\text{''} \text{ for any } x_k \in V \setminus \{x_i,x_j\}\} \\
&+\#\{\{x_i,x_j\} \subset  V \mid x_ix_j \not\in E, \\
&\quad\quad\quad
\text{``}x_ix_k \in E \text{ and } x_jx_k \in E\text{''} \text{ or }
\text{``}x_ix_k \not\in E \text{ and } x_jx_k \not\in E\text{''} \text{ for any } x_k \in V \setminus \{x_i,x_j\}\}. 
\end{align*}

Assume that $G$ contains an isolated vertex $x$. In this case, $x_ix \not\in E$ and $x_jx \not\in E$ hold
for any $\{x_i,x_j\} \subset V$ with $x_ix_j \in E$.
On the other hand, the condition
\begin{center}
$x_ix_j \not\in E$ such that ``$x_ix_k \in E$ and $x_jx_k \in E$'' or ``$x_ix_k \not\in E$ and $x_jx_k \not\in E$'' 

for any $x_k \in V \setminus \{x_i,x_j\}$ 
\end{center}
is equivalent to $N_G(x_i)=N_G(x_j)$. Hence, in the case $G$ contains an isolated vertex, we see that 
\begin{align*}
\ell(G)&=\#\{\{x_i,x_j\} \subset V \mid N_G(x_i)=N_G(x_j)\}. 
\end{align*}
Notice that $N_G(u)= \emptyset$ is equivalent to what $u$ is an isolated vertex. Therefore, in the case $G$ contains an isolated vertex, we conclude that 
\begin{align*}
\ell(G)=\#\{\{x_i,x_j\} \subset V \mid N_G(x_i)=N_G(x_j) \neq \emptyset\} + \binom{i(G)}{2}. 
\end{align*}
Let $J(G)=\{\{x_i,x_j\} \subset V \mid N_G(x_i)=N_G(x_j) \neq \emptyset\}$ and let $j(G)=\#J(G)$. 

\begin{ex}\label{ex.l}
If
\[
G_\e=
\xy /r2pc/: 
{\xypolygon6{~={90}~*{\xypolynode}~>{}}},
"1";"2"**@{-},
"2";"3"**@{-},
"3";"4"**@{-},
"4";"1"**@{-}
\endxy,
\]
then $J(G_\e)=\{ \{1,3\}, \{2,4\} \}$ and $\Iso(G_\e)=\{5,6\}$, so it follows that $\ell_\e= \ell(G_\e)= 2+ \binom{2}{2}=3$.
In fact, we can verify that 
\begin{align*}
\calE_\e= &\calV(x_1, x_2, x_5) \cup \calV(x_1, x_2, x_6) \cup \calV(x_1, x_4, x_5) \cup \calV(x_1, x_4, x_6)\\
&\cup \calV(x_2, x_3, x_5)\cup \calV(x_2, x_3, x_6) \cup \calV(x_3, x_4, x_5)\cup \calV(x_3, x_4, x_6)\\
&\cup \calV(x_1, x_2, x_3, x_4) \cup \calV(x_1, x_3, x_5, x_6) \cup \calV(x_2, x_4, x_5, x_6) \;\; \subset \PP^5,
\end{align*}
and so $\ell_\e=3$.
\end{ex}

\begin{lem}\label{lem.J(G)}
Let $G$ be a graph containing at least one isolated vertex. Assume that $j(G)>0$. 
Let $\{u_1,u_2\} \in J(G)$ and let $u_2,\ldots,u_m$ be all the vertices with $N_G(u_1)=N_G(u_i)$ for $i=2,\ldots,m$. 
Let $G'=\mu_{u_m \leftarrow u_1}(\cdots\mu_{u_3 \leftarrow u_1}(\mu_{u_2 \leftarrow u_1}(G))\cdots)$. Then $j(G')< j(G)$ and $\ell(G') \geq \ell(G)$. 
\end{lem}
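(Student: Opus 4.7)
The plan is to identify $G'$ explicitly, partition the vertices by the equivalence relation ``having the same neighborhood'' in both $G$ and $G'$, and then read off both differences from the counting formula $\ell = j + \binom{i(G)}{2}$ derived just before the lemma.

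First I would describe $G'$. Since $N_G(u_i) = N_G(u_1)$ for every $i \geq 2$, and no $u_j$ is adjacent to $u_1$ (otherwise $u_1 \in N_G(u_j) = N_G(u_1)$ would be a loop), each individual relative mutation $\mu_{u_i \leftarrow u_1}$ in the sequence changes only the neighborhood of $u_i$, replacing it with the symmetric difference $N_G(u_1) \triangle N_G(u_i) = \emptyset$. Hence in $G'$ the vertices $u_2,\ldots,u_m$ are isolated, $u_1$ keeps $N_G(u_1)$, each $w \in N_G(u_1)$ has $N_{G'}(w) = N_G(w) \setminus \{u_2,\ldots,u_m\}$, and every other neighborhood is unchanged; in particular the isolated vertex of $G$ persists in $G'$. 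I would then partition $V$ into equivalence classes under $v \sim v' \iff N_G(v) = N_G(v')$: the isolated class $C_0 = \Iso(G)$, the special class $C^* = \{u_1,\ldots,u_m\}$, and the remaining classes $C_1,\ldots,C_s$ with nonempty common neighborhoods $N_1,\ldots,N_s$, all distinct from $N_G(u_1)$. Then $\ell(G) = \binom{i(G)}{2} + \binom{m}{2} + \sum_i \binom{|C_i|}{2}$, with $j(G)$ obtained by dropping the first term, and analogously for $G'$ once its classes are identified.

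The heart of the proof is to show that in $G'$: (i) each $C_i$ remains a single class, (ii) no two distinct $C_i, C_j$ merge, and (iii) $\{u_1\}$ is its own class. The key observation is that any $C_i$ lies either entirely inside $N_G(u_1)$ or entirely outside it (since $v \in N_G(u_1) \iff u_1 \in N_i$), and when $C_i$ is inside, the identity $N_G(u_j) = N_G(u_1)$ for all $j$ forces $\{u_1,\ldots,u_m\} \subset N_i$. Writing $T := \{u_2,\ldots,u_m\}$, two inside-classes with $N_i \setminus T = N_j \setminus T$ necessarily satisfy $N_i = N_j$, contradicting $i \neq j$; an inside class still contains $u_1$ in its new neighborhood whereas an outside class does not, excluding inside-outside merges; and $\{u_1\}$ cannot merge with an outside $C_i$ (this would contradict the definition of $C^*$) or with an inside $C_i$ (this would force $u_1 \in N_G(u_1)$).

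Consequently the classes of $G'$ are $C_0 \cup \{u_2,\ldots,u_m\}$ of size $i(G) + m - 1$, the singleton $\{u_1\}$, and each $C_i$ preserved as a set. Substituting into the counting formulas yields $j(G') - j(G) = -\binom{m}{2} < 0$, where only the contribution of $C^*$ to $j$ is lost since $\{u_1\}$ is a singleton, together with $\ell(G') - \ell(G) = \binom{i(G) + m - 1}{2} - \binom{i(G)}{2} - \binom{m}{2} = (i(G) - 1)(m - 1) \geq 0$, which settles both inequalities. The main obstacle is step (ii): one must carefully rule out all possible merges of equivalence classes in $G'$, where the hypothesis that $u_2,\ldots,u_m$ are \emph{all} the vertices with $N_G(u_i) = N_G(u_1)$ is essential, so that stripping $T$ from inside-class neighborhoods cannot conflate distinct classes.
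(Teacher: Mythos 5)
Your proof is correct and follows essentially the same route as the paper's: show that $u_2,\ldots,u_m$ become isolated so that $i(G')=i(G)+m-1$ and $J(G')=J(G)\setminus\{\{u_i,u_j\}\mid 1\le i<j\le m\}$, then conclude via $\binom{i(G)+m-1}{2}-\binom{i(G)}{2}-\binom{m}{2}=(i(G)-1)(m-1)\ge 0$. Your equivalence-class bookkeeping is in fact somewhat more careful than the paper's terse justification, since it explicitly rules out new pairs entering $J(G')$ (e.g.\ a vertex of $N_G(u_1)$ merging with an outside vertex), which the paper passes over quickly.
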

\begin{proof}
Note that $\{u_i, u_j\} \in J(G)$ for any $1 \leq i < j \leq m$. 
It is easy to see that $u_2,\ldots,u_m$ become isolated vertices after applying $\mu_{u_2 \leftarrow u_1},\ldots,\mu_{u_m \leftarrow u_1}$. See below. 
\[
\underbrace{\xymatrix@R=1pc@C=0.3pc{
&u_1 \ar@{-}[ddl]\ar@{-}[ddr]\ar@{-}[ddrrr]\ar@{-}[ddrrrrrrr]
&&u_2 \ar@{-}[ddlll]\ar@{-}[ddl]\ar@{-}[ddr]\ar@{-}[ddrrrrr]
&&\cdots
&&u_m \ar@{-}[ddlllllll]\ar@{-}[ddlllll]\ar@{-}[ddlll]\ar@{-}[ddr]\\
\\
\circ &&\circ &&\circ &&\cdots && \circ}
}_{N_G(u_1)=\cdots =N_G(u_m) \neq \emptyset}
\underset{\text{relative mutations}}{\leadsto}
\underbrace{\xymatrix@R=1pc@C=0.3pc{
&u_1 \ar@{-}[ddl]\ar@{-}[ddr]\ar@{-}[ddrrr]\ar@{-}[ddrrrrrrr]
&&u_2 
&&\cdots
&&u_m \\
\\
\circ &&\circ &&\circ &&\cdots && \circ}
}_{N_G(u_1)\neq \emptyset \;\; (N_G(u_2)=\cdots =N_G(u_m)=\emptyset)}
\]
Thus $i(G')=i(G)+m-1$. Moreover, we also see that $J(G') = J(G) \setminus \{\{u_i,u_j\} \mid 1 \leq i<j \leq m\}$. 
In fact, since only the adjacencies of $u_i$'s and the adjacencies of the vertices in $N_G(u_1)$ change after applying the above relative mutations, 
we only need to observe the vertices of $N_G(u_1)$, but we can easily see that 
$\{w,w'\} \in J(G)$ if and only if $\{w,w'\} \in J(G')$ for any $w,w' \in N_G(u_1)$. 

Therefore, we conclude that $j(G')< j(G)$ and
\begin{align*}
\ell(G') = j(G')+\binom{i(G')}{2} =j(G)-\binom{m}{2} + \binom{i(G)+m-1}{2} 
\geq j(G)+\binom{i(G)}{2} = \ell(G), 
\end{align*}
as required. (The inequality above follows from the inequality $\displaystyle \binom{a+b-1}{2}-\binom{a}{2}-\binom{b}{2} \geq 0$, 
which is true for any positive integers $a,b$.) 
\end{proof}

\begin{ex}
Work with the same graph as in Example~\ref{ex.l}. 
Take $\{1,3\} \in J(G_\e)$. Then $3$ is the only vertex $i$ with $N_{G_\e}(1)=N_{G_\e}(i)$.
Apply the relative mutation $\mu_{3 \leftarrow 1}$ to $G_\e$. Then $G_\e$ becomes 
\[
G_{\e'}=
\xy /r2pc/: 
{\xypolygon6{~={90}~*{\xypolynode}~>{}}},
"2";"1"**@{-},
"1";"4"**@{-}
\endxy. 
\]
Since $J(G_{\e'})=\{ \{2,4\} \}$, we see $j(G_{\e'})<j(G_\e)$.
Moreover, apply $\mu_{4 \leftarrow 2}$ to $G_{\e'}$. Then $G_{\e'}$ becomes $G(1,4)$.
Hence $\ell(G_\e) \leq \ell(G_{\e'}) \leq \ell(G(1,4))=\binom{4}{2}=6$. 
\end{ex}

Now let us prove Theorem~\ref{thm4}.

\begin{proof}[\textit{\textbf{Proof of Theorem \ref{thm4}}}]
By \cite[Lemma 6.5]{MU}, mutation does not change the point scheme, 
so we may assume that $G_\e$ is a graph containing an isolated vertex by Lemma~\ref{lem.mu}.
It follows from Lemma~\ref{lem.rm} that $G_\e \sim G(\alpha,\beta)$ for some $\alpha \in \ZZ_{\geq 0}$ and $\beta \in \ZZ_{>0}$.
By the proof of Theorem~\ref{thm1}, we see that $r=\beta-1$. 
Thus our goal here is to prove that $\ell_\e \leq \binom{\beta}{2}$. 
Applying  Lemma~\ref{lem.J(G)} repeatedly, 
we can obtain a graph $G'$ such that $G' \sim G_\e$, $j(G')=0$, and $\ell(G') \geq \ell(G_\e)$.
Since $i(G') \leq \beta$ by Lemma~\ref{lem.max}, we conclude by Lemma~\ref{lem.J(G)} that 
$$\ell_\e = \ell(G_\e) \leq \ell(G') = \binom{i(G')}{2} \leq \binom{\beta}{2},$$
as desired. 
\end{proof}

\section*{Acknowledgments}
The first author was supported by JSPS Grant-in-Aid for Early-Career Scientists 17K14177. 
The second author was supported by JSPS Grant-in-Aid for Early-Career Scientists 18K13381.

\end{document}